\newtheorem{thm}{Theorem}[section]
\newtheorem{cor}[thm]{Corollary}
\newtheorem{lem}[thm]{Lemma}
\newtheorem{prp}[thm]{Proposition}
\newtheorem{exa}[thm]{Example}
\newtheorem{rem}[thm]{Remark}
\theoremstyle{definition}
\newtheorem{defn}{Definition}[section]
\newcommand{\scr}[1]{\mathscr #1}
\definecolor{wco}{rgb}{0.5,0.2,0.3}
\numberwithin{equation}{section} \theoremstyle{remark}
\newcommand{\ua}{\uparrow}
\title{{\bf Propagation of Chaos for Derivatives of McKean--Vlasov Stochastic Differential Equations and Applications   }
	\footnote{Supported in part by   the National Key R\&D Program of China (No. 2022YFA1006000).} }
\author{
	{\bf  Xiao-Yu Zhao  }\\
	\footnotesize{ Center for Applied Mathematics and KL-AAGDM, Tianjin University,   300072, China}\\
	\footnotesize{ zhxy\_0628@tju.edu.cn}}
\begin{document}
	\allowdisplaybreaks
	\def\R{\mathbb R}  \def\ff{\frac} \def\ss{\sqrt} \def\B{\mathbf
		B}\def\TO{\mathbb T}
	\def\I{\mathbb I_{\pp M}}\def\p<{\preceq}
	\def\N{\mathbb N} \def\kk{\kappa} \def\m{{\bf m}}
	\def\ee{\varepsilon}\def\ddd{D^*}
	\def\dd{\delta} \def\DD{\Delta} \def\vv{\varepsilon} \def\rr{\rho}
	\def\<{\langle} \def\>{\rangle} \def\GG{\Gamma} \def\gg{\gamma}
	\def\nn{\nabla} \def\pp{\partial} \def\E{\mathbb E}
	\def\d{\text{\rm{d}}} \def\bb{\beta} \def\aa{\alpha} \def\D{\scr D}
	\def\si{\sigma} \def\ess{\text{\rm{ess}}}
	\def\beg{\begin} \def\beq{\begin{equation}}  \def\eed{\end{equation}}\def\F{\scr F}
	\def\Ric{{\rm Ric}} \def\Hess{\text{\rm{Hess}}}
	\def\e{\text{\rm{e}}} \def\ua{\underline a} \def\OO{\Omega}  \def\oo{\omega}
	\def\tt{\tilde}
	\def\cut{\text{\rm{cut}}} \def\P{\mathbb P} \def\ifn{I_n(f^{\bigotimes n})}
	\def\C{\scr C}      \def\aaa{\mathbf{r}}     \def\r{r}
	\def\gap{\text{\rm{gap}}} \def\prr{\pi_{{\bf m},\varrho}}  \def\r{\mathbf r}
	\def\Z{\mathbb Z} \def\vrr{\varrho} \def\ll{\lambda}
	\def\L{\scr L}\def\Tt{\tt} \def\TT{\tt}\def\II{\mathbb I}
	\def\i{{\rm in}}\def\Sect{{\rm Sect}}  \def\H{\mathbb H}
	\def\M{\scr M}\def\Q{\mathbb Q} \def\texto{\text{o}} \def\LL{\Lambda}
	\def\Rank{{\rm Rank}} \def\B{\scr B} \def\i{{\rm i}} \def\HR{\hat{\R}^d}
	\def\to{\rightarrow}\def\l{\ell}\def\iint{\int}
	\def\EE{\scr E}\def\Cut{{\rm Cut}}\def\W{\mathbb W}
	\def\A{\scr A} \def\Lip{{\rm Lip}}\def\S{\mathbb S}
	\def\BB{\mathbb B}\def\Ent{{\rm Ent}} \def\i{{\rm i}}\def\itparallel{{\it\parallel}}
	\def\g{{\mathbf g}}\def\Sect{{\mathcal Sec}}\def\T{\mathcal T}\def\V{{\bf V}}
	\def\PP{{\bf P}}\def\HL{{\bf L}}\def\Id{{\rm Id}}\def\f{{\bf f}}\def\cut{{\rm cut}}
	\def\Ss{\mathbb S}
	\def\BL{\scr A}\def\Pp{\mathbb P}\def\Pp{\mathbb P} \def\Ee{\mathbb E} \def\8{\infty}\def\1{{\bf 1}}
	\maketitle
	\def\Cc{\mathcal C} \def\t{\theta}
	\begin{abstract}  
	As an enhanced version of existing results on Kac's propagation of  chaos, which describes  the convergence of  mean-field particle systems to a system of independent McKean--Vlasov particles as the number of particles tends to infinity, we prove the convergence at the level of derivatives with respect to the perturbations of the initial values and the driving noises,  together with explicit convergence rates that can be sharp. 
	As a consequence, the intrinsic derivative with respect to  the initial distribution  of  a single particle converges to that of an independent McKean--Vlasov SDE.
		
	\end{abstract} 
	\noindent
	AMS subject Classification: 60H10; 60K35; 60H07.\\	
	Keywords: Propagation of chaos, McKean--Vlasov SDE, Interacting particle systems, Directional derivative, Malliavin derivative, Bismut formula.
	
	\section{Introduction}

Kac's chaotic property, also called the Boltzmann property, is a key concept for deriving the spatially homogeneous Boltzmann equation; see \cite{Kac}.
Its dynamical version, known as propagation of chaos, shows that in a mean-field system the limit of a single particle is described by a McKean--Vlasov stochastic differential equation (SDE), as proposed in \cite{Mckean}; see also \cite{SN}. Such equations, also called distribution dependent or mean-field SDEs, are important probabilistic models for nonlinear Fokker--Planck equations and mean field games; see, for example, \cite{CD,RW24,SN}. The theory of propagation of chaos has been extensively developed; see the surveys and monographs \cite{CD,G16,H91,SN} and representative
quantitative results in, for instance, \cite{BH,BJW,SWT,WHGY}.

The standard propagation of chaos concerns the convergence of the solution processes themselves. It gives rates for the approximation of the McKean--Vlasov solution by a finite particle system. Apart from the solution processes,  derivative estimates are also important in stochastic analysis.  In this paper, we study  this problem at the level of derivatives. More precisely, we ask whether propagation of chaos remains valid after differentiating the dynamics. Our results show that propagation of chaos holds not only for the solutions, but also for the associated directional derivative flows, Malliavin derivative flows, and semigroup derivatives. Moreover, we obtain explicit convergence rates for these derivative-level
approximations. 
To the best of our knowledge, quantitative propagation of chaos at the level of derivatives was not previously available. A notable feature of our estimates is that, in certain cases, the rates coincide with the sharp empirical-measure rates appearing in the usual propagation of chaos.

Derivative estimates have long been studied for classical SDEs. They concern, for instance, directional derivatives of the solution flow with respect to the initial point, as well as Malliavin derivatives with respect to the driving noise. A classical application of such estimates is the Bismut formula, a derivative formula which gives a probabilistic representation for derivatives of diffusion semigroups.   The formula was first established by Bismut \cite{Bismut} via Malliavin calculus and was later reproved by Elworthy and Li \cite{EL} through a martingale approach. Thus, it is also called the Bismut--Elworthy--Li formula. 
More recently, the Bismut formula and the related derivative estimates have been extended to SDEs with singular coefficients; see Zhang \cite{Z16} for  integrable coefficients  and \cite{XXZZ} for  localized integrability coefficients.
In addition, Menoukeu et al. \cite{MNPZ} studied differentiability properties of strong solutions with bounded measurable drifts, in particular Malliavin differentiability.

For McKean--Vlasov SDEs, directional derivative flows and related Malliavin derivative flows are also important analytic tools in the regularity theory. For regular distribution dependent SDEs,  Ren and Wang \cite{RW19} studied these two derivative flows in order to establish the Bismut formula for Lions derivatives. The path-distribution dependent case was treated by  \cite{BRW20}. For singular McKean--Vlasov SDEs,  \cite{W23b} developed derivative estimates and established a Bismut formula for intrinsic derivatives.

The results mentioned above consider derivative estimates and derivative formula for SDEs or McKean--Vlasov SDEs themselves. The question studied
in the present paper is different: whether these derivative flows can be approximated by the derivative flows of weakly interacting particle systems. This derivative level propagation of chaos is not a consequence of the usual propagation of chaos for the solution processes, and  to the best of our knowledge, it has not been systematically studied before.

The closest existing works are \cite{dRW} and \cite{BdRW}, both of which study related questions  from the viewpoint of Malliavin calculus and particle systems. In \cite{dRW}, Malliavin differentiability was proved for McKean--Vlasov SDEs, and an interacting particle system was used to transfer Malliavin regularity to the limiting equation. In the common noise setting, \cite{BdRW} proved Malliavin differentiability with respect to the common noise, identified the Malliavin derivative of the conditional law, and derived an integration by parts formula on the
Wiener space. These results presented a qualitative propagation of chaos result for Malliavin derivatives. 

Compared with  the existing studies on the  propagation of chaos for  distributions and  Malliavin derivatives,   the present paper focuses on the propagation of chaos for  derivative flows and Bismut type derivative formulas. More precisely: 
\begin{itemize}
	\item[(i)] \emph{Propagation of chaos is derived for two types of derivative flows,} which describe sensitivities with respect to the initial distribution and the driving noise, and   which are the basic objects appearing in intrinsic derivatives on the Wasserstein space; see, for instance \cite{RW19,W23b}.
	To this end, we first study directional derivative
	flows.  Then   we consider the Malliavin derivatives along directions associated with the directional derivative flows (see \eqref{LLP1}, \eqref{LD00} and \eqref{LD0} below), which are  different from  those studied in  \cite{BdRW,dRW}.   
	\item[(ii)] \emph{A finite-particle approximation is presented for  the Bismut  derivative formula.}   As an application of the  estimates for derivative flows, we show in
	Section~\ref{app} that this approximation can be lifted to the level of intrinsic derivatives with respect to initial distribution; namely, the intrinsic derivative of the finite-particle semigroup converges to that of the limiting nonlinear semigroup, and the corresponding Bismut formula admits a quantitative finite-particle approximation. This is essentially different from the existing   propagation of chaos results at the level of distributions.  	
	\item[(iii)] \emph{Explicit convergence rates are obtained.} The propagation of chaos results related to Malliavin derivatives in \cite{BdRW,dRW} are qualitative, but no explicit rate is given there for the convergence of derivative flows. In this paper, we prove quantitative convergence for both directional derivative flows and the corresponding Malliavin derivative flows. The rates are determined by the regularity of the first-order derivatives of the
	coefficients, the moment order of the initial data and perturbations, and the empirical-measure approximation rate. 
	In particular, when the first-order derivatives of the coefficients are globally Lipschitz and the initial data and perturbations are bounded, these rates match the sharp empirical-measure rate underlying the usual propagation of chaos. Hence the derivative-level rates are sharp in this case, see Remark~\ref{sharp}(2) for a more detailed discussion.
	
	In addition, the propagation of chaos for the intrinsic derivative with respect to the initial distribution is also quantitative. For test functions in $C_b^2(\mathbb R^d)$, we obtain explicit convergence rates for this
	intrinsic derivative.
	Since this intrinsic derivative can be represented by the Bismut formula, this yields a quantitative finite-particle approximation of the Bismut formula.
\end{itemize}
Hence, the usual propagation of chaos is extended quantitatively to the derivative level.

We now describe the setting of the paper. Throughout the paper, let	
$(\R^d, |\cdot |)$ be the $d$-dimensional Euclidean space for some $d\in\mathbb N$.
Denote by $\R^d\otimes\R^d$   the family of all $d\times d$-matrices
with real entries, which is  equipped with the operator norm $\|\cdot\|$.    Let  $A^*$ denote the   transpose of $A\in \R^d\otimes \R^d$, and let
$\|\cdot\|_{\infty}$ be the uniform norm for functions taking values in $\R,\R^d$ or $\R^d\otimes \R^d$.
Let  $\|\cdot\|_{\rm HS}$ denote the Hilbert-Schmidt norm of matrices and $\scr P$ be the space of probability measures on $\R^d$. For any $k\in [2,\infty)$, the $L^k$-Wasserstein space
$$\scr P_k:=\{\mu\in \scr P:   \mu(|\cdot|^k)<\infty\}$$ is a Polish space
under  the  $L^k$-Wasserstein distance
$$\W_k(\mu,\nu):=   \inf_{\pi\in \C(\mu,\nu)} \bigg(\int_{\R^d\times\R^d} |x-y|^k \pi(\d x,\d y) \bigg)^{\ff 1{k}},\ \ \mu,\nu\in \scr P_ k,$$
where $\C(\mu,\nu)$ is the set of all couplings for $\mu$ and $\nu$.

For fixed $T>0$, consider the McKean--Vlasov SDE on $\R^d$
\begin{align}\label{E0}
	\d X_t=b_t(X_t,\L_{X_t})\d t+\sigma_t(X_t,\L_{X_t})\d W_t, \ \ t\in[0,T], 
\end{align}
where $W_t$ is an $m$-dimensional Brownian motion on  $(\OO, \{\F_t\}_{t\ge 0},\P)$, which is a complete filtered  probability space,  $\L_\xi$ is the distribution (i.e. the law) of a random variable $\xi$,
and \beg{align*}b:  [0,T]\times\R^d\times \scr  P_k\to \R^d,
\ \ \si: [0,T]\times\R^d\times \scr  P_k \to \R^d\otimes\R^m\end{align*}  are measurable. 
When different probability measures are considered, we denote $\L_{\xi}=\L_{\xi|\P}$ to emphasize the distribution of $\xi$ under $\P$.

Since \eqref{E0} is distribution dependent, weakly interacting particle systems are commonly used to approximate it. Let $N\ge1$ be an integer, and $(X_0^i,\eta^i,W^i_t)_{1\leq i\leq N}$ be independent and identically distributed (i.i.d.) copies of $(X_0,\eta,W_t)$, where $\eta,\eta^i$ will be used as perturbations of the initial values.  We also have the following non-particle system  McKean--Vlasov SDEs:
\begin{align}\label{E1}\d X_t^i= b_t(X_t^i, \L_{X_t^i})\d t+  \sigma_t(X^i_t,\L_{X_t^i}) \d W^i_t,\ \ t\in[0,T],\,1\leq i\leq N.
\end{align}
Denote $$\tt\mu_t^N:=\ff 1 N\sum_{j=1}^N\dd_{X_t^j}$$ as the empirical distribution of $(X_t^i)$. 
Consider the mean-field interacting particle system:
\begin{align}\label{E2}
\d X^{i,N}_t=b_t(X_t^{i,N}, \hat\mu_t^N)\d t+\sigma_t(X^{i,N}_t, \hat\mu_t^N) \d W^i_t,\ \ t\in[0,T], \,\,X_0^{i,N}=X_0^i,\,1\leq i\leq N,
\end{align}
where $$\hat\mu_t^N:=\ff 1 N\sum_{j=1}^N\dd_{X_t^{j,N}}.$$

The main results of this paper are Theorems~\ref{gf2} and~\ref{gf3}, which establish quantitative propagation of chaos with explicit convergence rates from the interacting particle system \eqref{E2} to the independent McKean--Vlasov system \eqref{E1} at the derivative level. 
More precisely, Theorem~\ref{gf2} treats the directional derivative flows, while Theorem~\ref{gf3} focuses on the corresponding Malliavin derivative flows. 
These estimates further yield a quantitative finite-particle approximation for intrinsic derivatives with respect to the initial distribution.

The rest of the paper is organized as follows. Section~\ref{pre} collects preliminary results, including the basic propagation of chaos estimates for the solution processes and the well-posedness of the derivative flows.
Section~\ref{apd} contains the main results, where quantitative propagation of chaos is proved for the directional and Malliavin derivative flows. Section~\ref{app} is devoted to the application to intrinsic derivatives.
\section{Preliminaries}\label{pre}	
\subsection{Classical results}
We briefly recall classical results involving the relationship between the independent McKean--Vlasov system \eqref{E1} and the interacting particle system \eqref{E2}.
\begin{enumerate}
\item[\bf (H)] Let $k\geq2$. There exists a constant $K>0$ such that $b$ and $\si$ satisfy that
\begin{equation}\label{SPPn}
	\sup_{t\in[0,T]}\left\{ |b_t({\bf 0},\dd_{{\bf 0}})|+\|\si_t({\bf 0},\dd_{{\bf 0}})\|\right\}<\8,\ \ 
\end{equation}
and
\begin{equation*}\label{SPP1}\beg{split} &2\<x-y,  b_t(x,\mu)- b_t(y,\nu)\>^++\|\si_t(x,\mu)-\si_t(y,\nu)\|_{HS}^2\\
	&\le K\big(|x-y|^2+\W_k(\mu,\nu)^2\big),\ \ t\in[0,T],\, x,y\in \R^d,\, \mu,\nu\in \scr P_k(\R^d),
	\end{split}\end{equation*}
	here and in what follows, for $x\in\R^d$, we denote  by $\dd_{x}$   the Dirac measure at $x$.
\end{enumerate} 
According to \cite[Theorem 2.1]{W18}, under  {\bf (H)},  for any initial values $X_0^i\in L^k(\OO\to \R^d,\F_0,\P)$, \eqref{E1}  has a unique solution $(X_t^i)_{1\leq i\leq N}$. Moreover,   for any $m\geq1$, there exists a constant $c(m)>0$ such that
\begin{equation}\label{MM1}
\E \Big[\sup_{t\in [0,T]} |X_t^i|^m\Big|\F_0\Big]\leq c(m)(1+|X_0^i|^m),\ \ 1\leq i\leq N.
\end{equation}
Since $(X_0^i,W^i)_{1\le i\le N}$ are i.i.d.
and \eqref{E1} is well-posed, the processes \((X_t^i)_{1\le i\le N}\) are i.i.d. We write  $$\mu_t:=\L_{X_t^i},\ \ t\in[0,T],\,1\leq i\leq N,$$ which is independent of $i$. 

The following lemma plays a vital role in the approximation of derivative  flows. For the case $k=2$, the corresponding convergence results can be found in  \cite[Lemma 1.9 and Theorem 1.10]{C16}. For completeness, we provide
the proof in Appendix \ref{2.1}.
\begin{lem}\label{poc}
Assume {\bf (H)}. Then, for the i.i.d. initial values $(X_0^i)_{1\leq i\leq N}$ introduced above with $X_0\in L^k(\OO\to \R^d,\F_0,\P)$,  \eqref{E2}  has a unique solution $(X_t^{i,N})_{1\leq i\leq N}\in L^k(\OO\to \R^d,\F_0,\P)$, which satisfies that for any $m\geq k$, there exists a constant $c(m)>0$ such that
\begin{equation}\label{MM2}
\E \Big[\sup_{t\in [0,T]} |X_t^{i,N}|^m\Big|\F_0\Big]\leq c(m)\bigg(1+|X_0^i|^m+\ff 1 N\sum_{j=1}^N|X_0^j|^m\bigg),\ \ 1\leq i\leq N.
\end{equation}
Consequently, 	\begin{equation}\label{MM3}
\E \Big[\sup_{t\in [0,T]} |X_t^{i,N}|^k\Big]\leq 2c(k)\bigg(1+\E|X_0|^k\bigg)<\8,\ \ 1\leq i\leq N.
\end{equation}
Moreover, 
\begin{align}\label{ap01}
\lim_{N\to\8}\left(
\E\left[\sup_{t\in[0,T]}|X_t^{1,N}-X_t^1|^k\right]+\sup_{0\leq  t\leq T}\E\left[\W_k(\hat\mu_t^N,\mu_t)^k\right]\right)=0.
\end{align}
If further assume that $\mu=\L_{X_0}\in\scr P_{q}$ for some $q>k$, then we can find a constant $C>0$ independent of $N$, such that
\begin{align}\label{ap02}
\E\left[\sup_{t\in[0,T]}|X_t^{1,N}-X_t^1|^k\right]+\sup_{0\leq  t\leq T}\E\left[\W_k(\hat\mu_t^N,\mu_t)^k\right]\leq C\epsilon(N),
\end{align}
where \begin{align}\label{en}\epsilon(N):=\begin{cases}
	N^{- 1 /2}+N^{-(q-k)/q}, &\textrm{if } k>d/2\, \textrm{and} \,q\ne 2k\\
	N^{- 1 /2}  \log (1+N)+N^{-(q-k)/q}, & \textrm{if } k=d/2\, \textrm{and} \,q\ne 2k,\\
	N^{-k/{d}}+N^{-(q-k)/q}, & \textrm{if } k\in(0,d/2)\, \textrm{and}\,q\ne d/(d-k).\end{cases}
	\end{align}
\end{lem}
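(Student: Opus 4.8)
The plan is to prove Lemma \ref{poc} by the standard coupling/synchronous-noise argument, organized in four stages: well-posedness and moment bounds, the qualitative limit \eqref{ap01}, and then the quantitative rate \eqref{ap02}--\eqref{en}. For well-posedness of \eqref{E2}, note that the $N$-particle system is a genuine (non-distribution-dependent) SDE on $(\R^d)^N$ whose drift and diffusion, viewed as functions of the vector $(X^{1,N}_t,\dots,X^{N,N}_t)$, inherit monotonicity and linear growth from {\bf (H)}: the empirical map $x\mapsto\ff1N\sum_j\dd_{x_j}$ is $1$-Lipschitz from $((\R^d)^N,|\cdot|)$ (scaled) into $(\scr P_k,\W_k)$, so the one-sided Lipschitz condition in {\bf (H)} lifts to a one-sided Lipschitz condition for the product system. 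Then \cite[Theorem 2.1]{W18} (or the classical monotone SDE theory) gives existence, uniqueness and the conditional moment estimate; \eqref{MM2} follows from Gronwall applied to $\E[\sup_{s\le t}|X^{i,N}_s|^m|\F_0]$, keeping track that the bound on $|b_t(x,\hat\mu^N_t)|$ and $\|\si_t(x,\hat\mu^N_t)\|$ involves $\W_k(\hat\mu^N_t,\dd_{\bf 0})^k\le\ff1N\sum_j|X^{j,N}_t|^k$; averaging \eqref{MM2} over $i$ and using that the $X^j_0$ are i.i.d.\ in $L^k$ yields \eqref{MM3}.

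For the convergence \eqref{ap01}, fix $i$ and set $\DD^i_t:=X^{i,N}_t-X^i_t$. Subtract \eqref{E1} from \eqref{E2}, apply It\^o's formula to $|\DD^i_t|^k$ (or first to $|\DD^i_t|^2$ and then bootstrap to $L^k$ via BDG and the moment bounds \eqref{MM1}, \eqref{MM3}), and use the monotonicity in {\bf (H)} to control the drift and diffusion increments by $|\DD^i_t|^2+\W_k(\hat\mu^N_t,\mu_t)^2$. The Wasserstein term is split by the triangle inequality: $\W_k(\hat\mu^N_t,\mu_t)\le\W_k(\hat\mu^N_t,\tt\mu^N_t)+\W_k(\tt\mu^N_t,\mu_t)$, where $\tt\mu^N_t=\ff1N\sum_j\dd_{X^j_t}$ is the empirical measure of the i.i.d.\ McKean--Vlasov copies. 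The first term is bounded by $\big(\ff1N\sum_j|\DD^j_t|^k\big)^{1/k}$ (optimal coupling along the index), so after taking expectations and using exchangeability one obtains a closed Gronwall inequality for $u(t):=\E[\sup_{s\le t}|\DD^i_s|^k]$ of the form $u(t)\le C\int_0^t u(s)\,\d s+C\sup_{s\le T}\E[\W_k(\tt\mu^N_s,\mu_s)^k]$; the second, "statistical" term tends to $0$ as $N\to\8$ by the law of large numbers for empirical measures in Wasserstein distance (using $\mu\in\scr P_k$ and the uniform moment bound \eqref{MM1}), giving \eqref{ap01} and, bounding $\W_k(\hat\mu^N_t,\mu_t)^k$ back through $\W_k(\tt\mu^N_t,\mu_t)^k$ and $u(t)$, the empirical-measure half as well.

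For the quantitative estimate \eqref{ap02} the only change is to quantify $\sup_{s\le T}\E[\W_k(\tt\mu^N_s,\mu_s)^k]$: under the extra integrability $\mu\in\scr P_q$, $q>k\ge2$, together with the propagated moment bound $\sup_{s\le T}\E|X^i_s|^q<\8$ from \eqref{MM1}, the Fournier--Guillin rate (\cite[Theorem 1]{FG} or the version in \cite[Theorem 1.10]{C16}) gives $\E[\W_k(\tt\mu^N_s,\mu_s)^k]\le C\,\epsilon(N)$ with $\epsilon(N)$ exactly as in \eqref{en} (the three regimes $k\gtrless d/2$ and the $N^{-(q-k)/q}$ term being precisely the Fournier--Guillin dichotomy). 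Feeding this into the Gronwall inequality from the previous paragraph yields \eqref{ap02}.

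The main obstacle, and the place where care is genuinely needed, is the $L^k$ (rather than $L^2$) estimate combined with the fact that {\bf (H)} only gives \emph{one-sided} Lipschitz control of $b$ and merely Lipschitz-in-$\W_k$ (not pointwise Lipschitz) control: one cannot naively apply It\^o to $|\DD^i_t|^k$ and must instead either work with $|\DD^i_t|^2(\e+|\DD^i_t|^2)^{(k-2)/2}$-type regularizations, or derive the $L^2$ bound first and then upgrade, at each stage absorbing the cross terms using Young's inequality and the high moment bounds. A secondary technical point is handling the exchangeability/averaging carefully so that the coupling bound $\W_k(\hat\mu^N_t,\tt\mu^N_t)^k\le\ff1N\sum_j|\DD^j_t|^k$ closes the Gronwall loop in $u(t)=\E[\sup_{s\le t}|\DD^i_s|^k]$ without losing a factor depending on $N$; this works precisely because $\E[\ff1N\sum_j\sup_{s\le t}|\DD^j_s|^k]=u(t)$ by symmetry. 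Everything else is a routine, if lengthy, Gronwall computation, which is why it is deferred to Appendix \ref{2.1}.
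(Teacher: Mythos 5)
Your proposal is correct and follows essentially the same route as the paper's Appendix A proof: lift \eqref{E2} to a monotone SDE on $(\R^d)^N$ via the Lipschitz property of the empirical-measure map for well-posedness and It\^o/BDG/Gr\"onwall with averaging over $i$ for \eqref{MM2}--\eqref{MM3}; then synchronous coupling, the triangle inequality through the i.i.d.\ empirical measure $\tt\mu^N_t$ together with exchangeability to close the Gr\"onwall loop, the law of large numbers in $\W_k$ for \eqref{ap01}, and the Fournier--Guillin rate \cite[Theorem 1]{FG} for \eqref{ap02}. Even your cautionary remark about the $L^k$ estimate matches the paper's treatment, which applies It\^o first to $|\cdot|^2$ and then upgrades to the $k$-th power.
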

\begin{rem}\label{exc}
Since  $(X_0^i,W^i_{\cdot})_{1\leq i\leq N}$ are i.i.d. and the particle system \eqref{E2} is symmetric in the particle labels, the uniqueness in Lemma \ref{poc} implies that $(X_t^{i,N})_{1\leq i\leq N}$ are identically distributed, although not independent. Moreover,   coupled pairs
$$
\big((X_t^{i,N})_{t\in[0,T]},(X_t^i)_{t\in[0,T]}\big),\quad 1\leq i\leq N,
$$
are identically distributed. Hence
the estimates \eqref{ap01} and \eqref{ap02}, stated for $i=1$, also hold for any $1\le i\le N$.
\end{rem}
\subsection{Derivative flows}
This subsection prepares for the propagation of chaos at the derivative level studied in the next section by collecting several basic properties of directional and Malliavin derivatives. We remark that the results in this subsection can be extended to the case where the drift is non-globally Lipschitz continuous. For the sake of clarity and readability, the corresponding extensions and discussions are deferred to Appendix \ref{df}.

For $k\geq 2$ and $\mu\in{\scr  P}_k$, the tangent space at $\mu$ is given by
$$
T_{\mu,k}:=
L^k(\R^d\to \R^d;\mu)
:=\{\phi:\R^d\to \R^d \text{ is measurable with }\mu(|\phi|^k)<\infty\},
$$
which is a Banach space under the norm
$
\|\phi\|_{T_{\mu,k}}:=\mu(|\phi|^k)^{\ff  1 k}.
$

We recall the definition of the intrinsic derivative on $\scr P_k$; see \cite{W23b} for historical remarks on this derivative and its relation to other derivatives for functions of measures.

\begin{defn}\label{defn1}
Let $k\geq2$ and $f: {\scr P}_k\to \R$ be a continuous function, and let ${\rm id}$ be the identity map on $\R^d$.
\begin{enumerate}
\item[(1)]
$f$ is called intrinsically differentiable on ${\scr P}_k$ if, for any $\mu\in {\scr P}_k$,
\begin{align*}
	T_{\mu,k}\ni\phi\mapsto D^{I}_{\phi}f(\mu):=\lim_{\vv\downarrow0}\ff{f\left( \mu\circ({\rm id}+\vv\phi)^{-1}\right) -f(\mu)}{\vv}\in\R
\end{align*}
is a well-defined bounded linear functional. In this case, there exists
$$
D^If(\mu)\in T_{\mu,k}^*:=L^{k^*}(\R^d\to\R^d;\mu),\ \ k^*:=\ff k{k-1},
$$
such that
$$
\int_{\R^d}\<D^If(\mu)(x), \phi(x)\>  \mu(\d x) = D_\phi^I f(\mu),\ \ \phi\in T_{\mu,k}.
$$

\item[(2)]
$f$ is called $L$-differentiable on ${\scr P}_k$ with $D^Lf=D^If$ if it is intrinsically differentiable and
\begin{equation*}
	\lim_{\|\phi\|_{T_{\mu,k}}\to0}\ff{\left|f\left( \mu\circ({\rm id}+\phi)^{-1}\right)-f(\mu)-D_\phi^{I}f(\mu)\right|}{\|\phi\|_{T_{\mu,k}}}=0,\ \ \mu\in\scr P_k.
\end{equation*}

\item[(3)]
$\D_k$ is the class of continuous functions $g$ on $\R^d\times{\scr P}_k$, with values in $\R$, $\R^d$ or a matrix space, such that $g(x,\mu)$ is differentiable in $x$, $L$-differentiable in $\mu$, and $D^{L}g(x,\mu)(y)$ has a version jointly continuous in $(x,y,\mu)\in \R^d\times \R^d\times{\scr P}_k$ such that
$$
|D^{L}g(x,\mu)(y)|\le c(x,\mu)(1+|y|^{k-1}),\ \ x,y\in\R^d,\mu\in  {\scr P}_k,
$$
for some positive function $c$ on $\R^d\times{\scr P}_k$. For vector or matrix valued functions, the derivatives are understood componentwise, and $|\cdot|$ denotes the corresponding Euclidean or operator norm.
\end{enumerate}
\end{defn}
To ensure the existence of  the derivative flows for \eqref{E1} and \eqref{E2}   in $L^k(\OO\to C([0,T];\R^d),\P)$, we make the following assumption. Note that ${\bf(H_1)}$ below can imply ${\bf(H)}$.

\begin{enumerate}\item[$\bf(H_1)$]  
Let $k\geq2$. The coefficients $b,\si$ satisfy 
\begin{align*}
\sup_{(t,x,\mu)\in[0,T]\times\R^d\times\scr P_k}\left\{|b_t({\bf 0},\dd_{{\bf 0}})|+\|\nn b_t(x,\mu)\|+\|\si_t({\bf 0},\dd_{{\bf 0}})\|+\|\nn \si_t(x,\mu)\|\right\}<\8,
\end{align*}
and for any $t\in[0,T]$, $b_t,\sigma_t\in\D_k$ such that
\begin{align}\label{l3}
\sup_{(t,x,y,\mu)\in[0,T]\times\R^d\times\R^d\times\scr P_k}\left\{\|D^Lb_t(x,\mu)(y)\|+\|D^L\si_t(x,\mu)(y)\|\right\}<\8.
\end{align}
\end{enumerate}

As the coefficients of the equation \eqref{E2} depend on the empirical measure,  we first recall the following differentiation formula with respect to the empirical measure, see \cite[Proposition 5.35]{CD}, which will be used to investigate the derivative flows associated with \eqref{E2}. Note that in that reference, the result is only stated for $k=2$ since the Lions derivative is defined in $\scr P_2$. Nevertheless, the same proof applies to the present $L$-differentiability setting on $\scr P_k$ for $k\ge2$. To avoid repetition, we omit the proof.
\begin{lem}\label{de}
Let $k\geq2$, if $f:\scr P_k\to\R$ is continuously $L$-differentiable, then
\begin{align*}
\nn_{x^i}f\left( \ff 1 N\sum_{j=1}^N\delta_{x^j}\right) =\ff 1 ND^Lf\left( \ff 1 N\sum_{j=1}^N\delta_{x^j}\right) (x^i),\ \ x^i\in\R^d,\,1\le i\le N.
\end{align*}
\end{lem}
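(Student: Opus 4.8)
The plan is to introduce $g(\mathbf x):=f\big(\tfrac1N\sum_{j=1}^N\delta_{x^j}\big)$ for $\mathbf x=(x^1,\dots,x^N)\in(\R^d)^N$ and to show $g\in C^1$ with $\pp_{x^i}g(\mathbf x)=\tfrac1N D^Lf\big(\tfrac1N\sum_j\delta_{x^j}\big)(x^i)$. Since $\mathbf x\mapsto\tfrac1N\sum_j\delta_{x^j}$ is continuous from $(\R^d)^N$ into $(\scr P_k,\W_k)$ (use the diagonal coupling) and $f$ is continuous, $g$ is continuous. I would first establish the identity on the open dense set $U:=\{\mathbf x:x^a\ne x^b\ \text{for}\ a\ne b\}$ by a direct appeal to the definition of $L$-differentiability, and then remove the distinctness restriction by continuity together with the fundamental theorem of calculus.

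\textbf{Step 1: the formula on $U$.} Fix $\mathbf x\in U$, an index $i$, and $h\in\R^d$ of small norm, and set $\mu:=\tfrac1N\sum_j\delta_{x^j}$. Since the $x^j$ are pairwise distinct, choose $\phi_h\in T_{\mu,k}=L^k(\R^d\to\R^d;\mu)$ with $\phi_h(x^i)=h$ and $\phi_h(x^j)=0$ for $j\ne i$ (its values off $\{x^1,\dots,x^N\}$ being irrelevant), so that $\mu\circ(id+\phi_h)^{-1}=\tfrac1N\sum_{j\ne i}\delta_{x^j}+\tfrac1N\delta_{x^i+h}$ is exactly the empirical measure of the configuration obtained from $\mathbf x$ by replacing $x^i$ with $x^i+h$, and $\|\phi_h\|_{T_{\mu,k}}\to0$ as $h\to0$. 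The first-order expansion granted by $L$-differentiability then gives
$$g(\dots,x^i+h,\dots)-g(\mathbf x)=\int_{\R^d}\big\langle D^If(\mu)(x),\phi_h(x)\big\rangle\,\mu(\d x)+o(\|\phi_h\|_{T_{\mu,k}})=\tfrac1N\big\langle D^Lf(\mu)(x^i),h\big\rangle+o(|h|),$$
using $D^Lf=D^If$ and the fact that only the atom $x^i$ contributes to the integral. As $h$ is arbitrary, $g$ is differentiable in $x^i$ on $U$ with $\pp_{x^i}g(\mathbf x)=\tfrac1N D^Lf\big(\tfrac1N\sum_j\delta_{x^j}\big)(x^i)=:\Psi^i(\mathbf x)$.

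\textbf{Step 2: extension to $(\R^d)^N$.} Continuous $L$-differentiability provides a jointly continuous version of $(y,\mu)\mapsto D^Lf(\mu)(y)$, so each $\Psi^i$ is continuous on all of $(\R^d)^N$, and by Step 1 one has $\nn g=\Psi:=(\Psi^1,\dots,\Psi^N)$ on $U$. Next, take $\mathbf x,\mathbf y\in U$ for which the segment $[\mathbf x,\mathbf y]$ meets $(\R^d)^N\setminus U=\bigcup_{a<b}\{x^a=x^b\}$ in only finitely many points; such pairs form a dense subset of $(\R^d)^N\times(\R^d)^N$, since $U^c$ is a finite union of proper linear subspaces. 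Splitting $[\mathbf x,\mathbf y]$ at those finitely many parameters, applying the fundamental theorem of calculus on each subinterval (where the segment lies in $U$ and $\nn g=\Psi$), and passing to the limit using the continuity of $g$ and the boundedness of the continuous map $\Psi$ on the compact segment, we obtain $g(\mathbf y)-g(\mathbf x)=\int_0^1\langle\Psi(\mathbf x+t(\mathbf y-\mathbf x)),\mathbf y-\mathbf x\rangle\,\d t$. Both sides are continuous in $(\mathbf x,\mathbf y)$, so the identity extends to all $\mathbf x,\mathbf y$; hence $g\in C^1$ with $\nn g=\Psi$, and reading off the $i$-th block gives $\pp_{x^i}f\big(\tfrac1N\sum_j\delta_{x^j}\big)=\tfrac1N D^Lf\big(\tfrac1N\sum_j\delta_{x^j}\big)(x^i)$.

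\textbf{Main obstacle and an alternative.} The delicate point is precisely the configurations with coinciding atoms: when $x^i=x^j$ for some $j\ne i$, moving the single atom $x^i$ is not realized by any push-forward $\mu\circ(id+\phi)^{-1}$, so the pointwise definition of the intrinsic derivative cannot be invoked directly and one must route through the continuity/FTC argument of Step 2. An alternative that bypasses this issue entirely is to use the equivalence of $L$-differentiability with Fréchet differentiability of the lift $L^k(\OO;\R^d)\ni X\mapsto f(\L_X)$: choosing $X=x^\tau$ with $\tau$ uniform on $\{1,\dots,N\}$ and differentiating in the direction $\1_{\{\tau=i\}}h$ gives $f(\bar\mu_{\mathbf x^h})-f(\bar\mu_{\mathbf x})=\tfrac1N\langle D^Lf(\bar\mu_{\mathbf x})(x^i),h\rangle+o(|h|)$ with no distinctness assumption, where $\bar\mu_{\mathbf x}$ and $\bar\mu_{\mathbf x^h}$ denote the empirical measures of $\mathbf x$ and of $\mathbf x$ with $x^i$ replaced by $x^i+h$. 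Everything else is routine once the distinct-atom case is in hand.
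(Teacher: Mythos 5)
Your argument is correct, but it takes a different route from the one the paper relies on: the paper omits the proof entirely and refers to Carmona--Delarue (Proposition 5.35), whose argument is the Hilbert-space lift computation that you only sketch as your ``alternative'' (take $X=x^\tau$ with $\tau$ uniform on $\{1,\dots,N\}$ and differentiate the lifted map in the direction $\1_{\{\tau=i\}}h$, which handles coinciding atoms automatically). Your main proof instead works directly with the paper's Definition \ref{defn1}: on configurations with pairwise distinct atoms the push-forward $\mu\circ(id+\phi_h)^{-1}$ realizes the perturbation of a single atom, $\|\phi_h\|_{T_{\mu,k}}=N^{-1/k}|h|$, and the $o(\|\phi\|)$ expansion of $L$-differentiability gives the formula; coinciding atoms are then recovered by your FTC/continuity argument (note that in fact for \emph{any} pair of endpoints in $U$ the segment meets each set $\{x^a=x^b\}$ in at most one point, so the density restriction on pairs is not even needed). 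This buys a proof adapted to the paper's intrinsic-derivative definition on $\scr P_k$, avoiding the lift equivalence, which is standard for $k=2$ but is neither stated nor set up in the paper for general $k\ge 2$; the price is that you must invoke a continuous version of $(\mu,y)\mapsto D^Lf(\mu)(y)$ (or at least continuity of $\mathbf x\mapsto D^Lf(\bar\mu_{\mathbf x})(x^i)$) to make $\Psi$ continuous in Step 2 — this is consistent with how the lemma is used in the paper, where the coefficients lie in $\D_k$ and such a jointly continuous version is assumed, but it is worth stating explicitly as the reading of ``continuously $L$-differentiable.'' By contrast, the lift argument needs no such continuity at the coincidence set but imports the equivalence with Fréchet differentiability of the lifted map.
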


\subsubsection{Directional Derivative Flows}
Let $k\geq2$ and $X_0,\eta\in L^k(\OO\to\R^d,\F_0,\P)$. As before, let $(X_0^i,\eta^i,W^i)_{1\leq i\leq N}$ be i.i.d. copies of $(X_0,\eta,W)$, and set $\boldsymbol\eta=(\eta^1,\eta^2,\cdots,\eta^N)$. For any $\vv\geq0$ and
$1\leq i\leq N$, let $X_t^{i,\vv}$ and $X_t^{i,N,\vv}$ solve the SDEs
\begin{align}\label{E10}
\d X_t^{i,\vv}
=
b_t(X_t^{i,\vv},\mu_t^\vv)\d t
+\si_t(X_t^{i,\vv},\mu_t^\vv)\d W_t^i,\quad t\in[0,T],
\end{align}
and
\begin{align}\label{E20}
\d X_t^{i,N,\vv}
=
b_t(X_t^{i,N,\vv},\hat\mu_t^{N,\vv})\d t
+\si_t(X_t^{i,N,\vv},\hat\mu_t^{N,\vv})\d W_t^i,\quad t\in[0,T],
\end{align}
respectively, with
$$
X_0^{i,\vv}=X_0^{i,N,\vv}=X_0^i+\vv\eta^i,
$$ where
\begin{align}\label{ev}
\mu_t^\vv:=\L_{X_t^{i,\vv}},\quad
\hat\mu_t^{N,\vv}:=\ff 1 N\sum_{j=1}^N\dd_{X_t^{j,N,\vv}}.
\end{align}
The definition of $\mu_t^\vv$ is independent of $i$, since $(X_0^i,\eta^i,W^i)_{1\leq i\leq N}$ are i.i.d. and \eqref{E10} is well-posed. In particular, when $\vv=0$, $\mu_t^\vv=\mu_t$, $X_t^{i,\vv}=X_t^i$, and $X_t^{i,N,\vv}=X_t^{i,N}$. 

The following lemma recalls that
\begin{align}\label{vi1}
V_t^i:=\nn_{\eta^i}X_t^i
:=\lim_{\vv\downarrow0}\ff{X_t^{i,\vv}-X_t^i}{\vv},
\quad t\in[0,T],\,1\leq i\leq N,
\end{align}
and
\begin{align}\label{vi2}
V_t^{i,N}:=\nn_{\boldsymbol\eta}X_t^{i,N}
:=\lim_{\vv\downarrow0}\ff{X_t^{i,N,\vv}-X_t^{i,N}}{\vv},
\quad t\in[0,T],\,1\leq i\leq N,
\end{align}
exist in $L^k(\OO\to C([0,T];\R^d),\P)$. 
\begin{lem}\label{gf1} Assume $\bf(H_1)$. Then, for any $X_0,\eta\in L^k(\OO\to\R^d,\F_0,\P)$, the processes
$(V_t^i)_{t\in[0,T]}$ and $(V_t^{i,N})_{t\in[0,T]}$ exist in
$L^k(\OO\to C([0,T];\R^d),\P)$, and uniquely solve
\begin{equation}\label{v1}
\left\{
\begin{aligned}
	\d V_t^i
	=&\left\{\nn_{V_t^i}b_t(X_t^i,\mu_t)
	+\left(\E\Big\<D^Lb_t(y,\cdot)(\mu_t)(X_t^i),V_t^i\Big\>\Big|_{y=X_t^i}\right)\right\}\d t\\
	&+\left\{\nn_{V_t^i}\si_t(X_t^i,\mu_t)
	+\left(\E\Big\<D^L\si_t(y,\cdot)(\mu_t)(X_t^i),V_t^i\Big\>\Big|_{y=X_t^i}\right)\right\}\d W_t^i,\\
	&V_0^i=\eta^i,\quad 1\leq i\leq N,\ t\in[0,T],
\end{aligned}
\right.
\end{equation}
and
\begin{equation}\label{v2}
\left\{
\begin{aligned}
	\d V_t^{i,N}
	=&\left\{\nn_{V_t^{i,N}}b_t(X_t^{i,N},\hat\mu_t^N)
	+\ff 1 N\sum_{j=1}^N
	\Big\<D^Lb_t(X_t^{i,N},\cdot)(\hat\mu_t^N)(X_t^{j,N}),V_t^{j,N}\Big\>\right\}\d t\\
	&+\left\{\nn_{V_t^{i,N}}\si_t(X_t^{i,N},\hat\mu_t^N)
	+\ff 1 N\sum_{j=1}^N
	\Big\<D^L\si_t(X_t^{i,N},\cdot)(\hat\mu_t^N)(X_t^{j,N}),V_t^{j,N}\Big\>\right\}\d W_t^i,\\
	&V_0^{i,N}=\eta^i,\quad 1\leq i\leq N,\ t\in[0,T],
\end{aligned}
\right.
\end{equation} respectively. 
Moreover,  for any  $m\geq1$, 
\begin{align}\label{ve1}
\E\left[\sup_{t\in[0,T]}|V_t^i|^m\Big|\F_0\right]\leq c(1+|\eta^i|^m),
\quad 1\leq i\leq N,
\end{align}
and for any $m\geq k$,
\begin{align}\label{ve2}
\E\left[\sup_{t\in[0,T]}|V_t^{i,N}|^m\Big|\F_0\right]
\leq c\left(|\eta^i|^m+\ff 1 N\sum_{j=1}^N|\eta^j|^m\right),
\quad 1\leq i\leq N,
\end{align}
hold for some constant $c>0$.
\end{lem}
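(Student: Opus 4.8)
The plan is to treat the two systems \eqref{v1} and \eqref{v2} separately, carrying out the mean--field particle system \eqref{v2} in detail and reducing the McKean--Vlasov system \eqref{v1} to the arguments of \cite{RW19}. For \eqref{v2}, the first observation is that, under $\bf(H_1)$ and Lemma \ref{de}, equation \eqref{v2} is a \emph{linear} system of SDEs for the $\R^{dN}$-valued process $(v_t^{\boldsymbol\eta,1,N},\dots,v_t^{\boldsymbol\eta,N,N})$ whose coefficients are bounded uniformly in $N$: the drift and diffusion of the $i$-th component have norm at most $K\big(|v_t^{\boldsymbol\eta,i,N}|+\ff1N\sum_{j=1}^N|v_t^{\boldsymbol\eta,j,N}|\big)$, with $K$ from \eqref{l1'}--\eqref{l3}. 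Existence and uniqueness in $L^k(\OO\to C([0,T];\R^d),\P)$ then follow from the standard theory of linear SDEs. For the bound \eqref{ve2} I would run a two--stage Gronwall argument: writing $f_i(t):=\E\big[\sup_{s\le t}|v_s^{\boldsymbol\eta,i,N}|^m\mid\F_0\big]$ for $m\ge2$, the Burkholder--Davis--Gundy inequality gives $f_i(t)\le c|\eta^i|^m+c\int_0^t f_i(s)\,\d s+c\int_0^t\ff1N\sum_j f_j(s)\,\d s$; averaging over $i$ and applying Gronwall yields $\ff1N\sum_j f_j(t)\le c'\ff1N\sum_j|\eta^j|^m$, and substituting this back and applying Gronwall once more gives \eqref{ve2}; the range $m\in[1,2)$ follows by conditional Jensen.

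Next I would prove the convergence \eqref{vi2}. Setting $\xi_t^{\vv}:=\vv^{-1}(X_t^{\boldsymbol\eta,i,N,\vv}-X_t^{i,N})$, the same Gronwall scheme applied to the difference of \eqref{E20} and \eqref{E2} --- using that $\bf(H_1)$ makes $b_t,\si_t$ Lipschitz in $(x,\W_k)$ and that $\W_k(\hat\mu_t^{\boldsymbol\eta,N,\vv},\hat\mu_t^N)^k\le\ff1N\sum_j|X_t^{\boldsymbol\eta,j,N,\vv}-X_t^{j,N}|^k$ --- gives $\sup_{\vv\in(0,1]}\E\big[\sup_{t\le T}|\xi_t^\vv|^k\big]<\8$ and hence $\E\big[\sup_{t\le T}|X_t^{\boldsymbol\eta,i,N,\vv}-X_t^{i,N}|^k\big]\le c\vv^k\to0$ as $\vv\downarrow0$. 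Using $b_t,\si_t\in\D_k$, Lemma \ref{de}, and the fundamental theorem of calculus along the interpolations $X_s^{j,N}+r(X_s^{\boldsymbol\eta,j,N,\vv}-X_s^{j,N})$, $r\in[0,1]$, one obtains for $\xi^\vv$ a closed linear SDE of the same structure as \eqref{v2}, with matrix coefficients bounded by $K$ uniformly in $\vv,N$ and converging, as $\vv\downarrow0$, to the coefficients of \eqref{v2} --- here one uses the joint continuity in $(x,y,\mu)$ built into the definition of $\D_k$ together with $X^{\boldsymbol\eta,j,N,\vv}\to X^{j,N}$. Subtracting \eqref{v2} from this SDE and running the two--stage Gronwall argument on $\xi_t^\vv-v_t^{\boldsymbol\eta,i,N}$, the residual is dominated by terms of the form $\int_0^T\E\big[\,\|\mathbf A_s^\vv-\mathbf A_s\|^2|v_s^{\boldsymbol\eta,i,N}|^2+\cdots\big]\,\d s$, which tend to $0$ by dominated convergence thanks to \eqref{ve2}; this gives $\xi^\vv\to v^{\boldsymbol\eta,i,N}$ in $L^k(\OO\to C([0,T];\R^d),\P)$, i.e.\ \eqref{vi2}.

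For \eqref{v1}, the well-posedness and the bound \eqref{ve1}, as well as the existence of $v^{\eta,i}$ in \eqref{vi1}, I would quote from \cite{RW19}: \eqref{v1} is a linear distribution--dependent SDE whose law--dependence enters only through $\E|v_t^{\eta,i}|$ (because $D^Lb_t,D^L\si_t$ are bounded), so it is solved by a fixed point and \eqref{ve1} follows from Burkholder--Davis--Gundy and Gronwall; the difference quotient $\vv^{-1}(X_t^{\boldsymbol\eta,i,\vv}-X_t^i)$ converges to $v^{\eta,i}$ by the same type of stability estimate, now using $\W_k(\mu_t^{\boldsymbol\eta,i,\vv},\mu_t^i)\le(\E|X_t^{\boldsymbol\eta,i,\vv}-X_t^i|^k)^{1/k}\to0$. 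Finally, the identity $\nn_{\boldsymbol\eta}X_t^i=\nn_{\eta^i}X_t^i$ in \eqref{vi1} is immediate once one notes that for i.i.d.\ data the law $\mu_t^{\boldsymbol\eta,j,\vv}$ in \eqref{ev} is the same for every $j$, so that $X_t^{\boldsymbol\eta,i,\vv}$, the solution of \eqref{E10}, depends on $\boldsymbol\eta$ only through $\eta^i$.

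The hard part will be the middle step, and specifically keeping every constant produced by Gronwall's inequality \emph{uniform in $N$}. This is possible only because inter--particle coupling enters \eqref{v2} and its perturbed version \emph{exclusively} through the averages $\ff1N\sum_j$, so that the interaction operator on $\R^{dN}$ has operator norm $O(1)$; it is precisely the two--stage (first average, then individual) Gronwall argument that exploits this and at the same time delivers the sharp form of \eqref{ve2} with its two terms $|\eta^i|^m$ and $\ff1N\sum_j|\eta^j|^m$. The accompanying measure--derivative bookkeeping --- invoking Lemma \ref{de} and the first--order Taylor expansion for $L$--differentiable functions along the interpolating flows --- is routine, but must be arranged carefully so that the $\vv\downarrow0$ limit of the coefficients reproduces exactly the drift and diffusion of \eqref{v2}.
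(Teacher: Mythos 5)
Your proposal is correct and follows essentially the same route as the paper: the paper cites \cite{W23b} for the existence of the derivative flows, their equations \eqref{v1}--\eqref{v2} and the bound \eqref{ve1}, and its only new argument is precisely your two-stage (average-then-individual) Gr\"onwall scheme with BDG for \eqref{ve2}. Your additional sketch of the difference-quotient convergence for the particle system (uniform moment bound, interpolation via Lemma \ref{de}, remainder terms vanishing by dominated convergence) is also the argument the paper carries out in Appendix \ref{df} under a weaker hypothesis, so nothing genuinely new or problematic arises.
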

\begin{proof}
The existence of \eqref{vi1}, its satisfaction of \eqref{v1}, and the estimate
\eqref{ve1} follow from
\cite[Lemma 4.1, Proposition 4.2]{W23b}. For the particle
system, as in Appendix~\ref{2.1}, we regard \eqref{E2} as a classical SDE on
$\R^{dN}$.  Applying \cite[Theorem 2.1, Proposition 4.2]{W23b} to this $\R^{dN}$-valued
system, and using Lemma~\ref{de} to identify the derivatives of the empirical-measure
terms, we obtain the existence of \eqref{vi2} and its equation \eqref{v2}.
Therefore, it remains to prove \eqref{ve2}.

Since $m\geq k\geq2$,  using \eqref{v2}, H\"older's and Jensen's inequalities, and assumption $\bf(H_1)$, there exists a constant $c_1(m)>0$ such that for any $t\in[0,T]$ and $1\leq i\leq N$,
\begin{align}\label{v01}
\d |V_t^{i,N}|^m
\leq c_1(m)\left\{|V_t^{i,N}|^m+\ff 1 N\sum_{j=1}^N|V_t^{j,N}|^m\right\}\d t+\d M_t^i,
\end{align}
where
\begin{align*}
\d \<M^i\>_t
\leq c_1(m)|V_t^{i,N}|^{2(m-1)}
\left(|V_t^{i,N}|^2+\ff 1 N\sum_{j=1}^N|V_t^{j,N}|^2\right)\d t.
\end{align*}
Here we used
$$
\left(\ff1N\sum_{j=1}^N|V_t^{j,N}|\right)^m
\leq \ff1N\sum_{j=1}^N|V_t^{j,N}|^m,
$$
and the same type of estimate for the diffusion term.
Applying the Burkholder--Davis--Gundy, Jensen's and Young's inequalities, for any $t\in[0,T]$, we obtain
\begin{align}\label{v02}
\begin{split}
	\E\left[\sup_{s\in[0,t]}|V_s^{i,N}|^m\bigg|\F_0\right]
	\leq& |\eta^i|^m
	+\ff12\E\left[\sup_{s\in[0,t]}|V_s^{i,N}|^m\bigg|\F_0\right]\\
	&+c_2(m)\int_0^t
	\E\left[|V_s^{i,N}|^m+\ff1N\sum_{j=1}^N|V_s^{j,N}|^m\bigg|\F_0\right]\,\d s
\end{split}
\end{align}
for some constant $c_2(m)>0$.

Averaging \eqref{v02} over $i=1,2,\cdots,N$, for any $t\in[0,T]$, we derive
\begin{align*}
\ff 1 N\sum_{i=1}^N
\E\left[\sup_{s\in[0,t]}|V_s^{i,N}|^m\bigg|\F_0\right]
\leq
\ff 2 N\sum_{i=1}^N|\eta^i|^m
+4c_2(m)\int_0^t
\ff 1 N\sum_{i=1}^N
\E\left[\sup_{r\in[0,s]}|V_r^{i,N}|^m\bigg|\F_0\right]\d s,
\end{align*}
then Gr\"onwall's inequality yields that 
\begin{equation*}
\begin{aligned}
	\ff 1 N\sum_{i=1}^N\E\left[\sup_{s\in[0,t]}|V_s^{i,N}|^m\bigg|\F_0\right]
	\leq \ff {2\e^{4c_2(m)T}} N\sum_{i=1}^N|\eta^i|^m, \ \ t\in[0,T].
\end{aligned}
\end{equation*}
Plugging this into \eqref{v02} and using Gr\"onwall's inequality once more, we obtain \eqref{ve2}.
\end{proof}
\subsubsection{Malliavin derivative flows}
Consider the Cameron--Martin space
$$\H= \bigg\{h\in C([0,T]\to \R^m): h_0={\bf0}, h_t'\ \text{exists\ a.e.}\ t, \|h\|_\H^2:= \int_0^T | h_t'|^2\d t<\infty\bigg\}.$$
Let $\mu_T$ be the distribution of $W_{[0,T]}:=\{W_t\}_{t\in [0,T]}$, which is the Wiener measure on the path space
$\C_T:=C([0,T]\to\R^m)$. For $F\in L^2(\R^d\times \C_T,\mu\times \mu_T)$, $F(X_0,W_{[0,T]})$ is called Malliavin differentiable along a deterministic direction $h\in\H$ if the directional derivative
$$
D_hF(X_0,W_{[0,T]}):=
\lim_{\vv\to0}\ff{F(X_0,W_{[0,T]}+\vv h)-F(X_0,W_{[0,T]})}{\vv}
$$
exists in $L^2(\OO,\P)$. If the map $\H\ni h\mapsto D_hF\in L^2(\OO,\P)$ is bounded, then there exists a unique
$DF(X_0,W_{[0,T]})\in L^2(\OO\to\H,\P)$ such that
$$
\<DF(X_0,W_{[0,T]}),h\>_\H=D_hF(X_0,W_{[0,T]}),\quad h\in\H.
$$
In this case, we write $F(X_0,W_{[0,T]})\in\D(D)$ and call $DF(X_0,W_{[0,T]})$ the Malliavin derivative of $F(X_0,W_{[0,T]})$. 
When $h\in L^2(\OO\to\H,\P)$ is random, we define
$$
D_hF:=\<DF,h\>_\H,\quad F\in\D(D),
$$
whenever the right-hand side is well defined.
It is well known that $(D,\D(D))$ is a closed linear operator from
$L^2(\OO,\F_T,\P)$ to $L^2(\OO\to\H,\F_T,\P)$. The adjoint operator
$(D^*,\D(D^*))$ of $(D,\D(D))$ is called the Malliavin divergence. For simplicity, in the sequel we denote $F(X_0,W_{[0,T]})$ by $F$. Then we have the integration by parts formula
\beq\label{INT}
\E\big(D_hF\big|\F_0\big)=\E\big(FD^*(h)\big|\F_0\big),\quad F\in\D(D),\ h\in\D(D^*).
\end{equation}
Moreover, for adapted $h\in L^2(\OO\to\H,\P)$, one has $h\in\D(D^*)$ with
\beq\label{DD*}
D^*(h)=\int_0^T \<h_t',\d W_t\>.
\end{equation}
For more details and applications of Malliavin calculus, one may refer to \cite{NN} and the references therein.

To calculate the Malliavin derivatives of $X_t^i$ and $X_t^{i,N}$ with
$\L_{X_0}=\mu\in\scr P_k$, $k\geq2$, we write $X_t^i=F_t^i(X_0^i,W^i_\cdot)$
and $X_t^{i,N}=F_t^{i,N}(\boldsymbol X_0,\boldsymbol W_\cdot)$ as functionals of
the initial values and the Brownian motions, where
$$
\boldsymbol X_0=(X_0^1,\cdots,X_0^N),\quad
\boldsymbol W_t=(W_t^1,\cdots,W_t^N).
$$
Then,  for adapted
$\boldsymbol h=(h^1,\cdots,h^N)$ with $h^i\in L^k(\OO\to\H,\P)$, 
\begin{align*}
D_{h^i}X_t^i
&=\lim_{\vv\downarrow0}
\ff{F_t^i(X_0^i,W^i_\cdot+\vv h^i_\cdot)-F_t^i(X_0^i,W^i_\cdot)}{\vv},\\
D_{\boldsymbol h}X_t^{i,N}
&=\lim_{\vv\downarrow0}
\ff{F_t^{i,N}(\boldsymbol X_0,\boldsymbol W_\cdot+\vv\boldsymbol h_\cdot)
-F_t^{i,N}(\boldsymbol X_0,\boldsymbol W_\cdot)}{\vv}.
\end{align*}

We shall study Malliavin derivatives along adapted random directions, which requires the diffusion coefficient to be distribution independent. Hence, in the rest of this part, we consider \eqref{E1} and \eqref{E2} with $\si_t(x,\mu)=\si_t(x)$ and make the following assumption.
\begin{enumerate}
\item[$\bf(H_2)$] $\si_t(x,\mu)=\si_t(x)$, and $a:=\si\si^*$ is invertible with $\|a^{-1}\|_\8<\8$, where $\si^*$ is the transposition of $\si$.
\end{enumerate}
For any $0\leq r<T$ and $g\in C^1([r,T])$ with $g_r=0$ and $g_T=1$, define
\begin{equation}\label{LLP1}
\begin{split}
h_t^{i}
&:=\int_0^t 1_{\{s\geq r\}}(\si_s^*a_s^{-1})(X_s^{i})g_s' V_s^{i}\,\d s,\\
h_t^{i,N}
&:=\int_0^t 1_{\{s\geq r\}}(\si_s^*a_s^{-1})(X_s^{i,N})g_s' V_s^{i,N}\,\d s,
\quad t\in[0,T],\ 1\leq i\leq N,
\end{split}
\end{equation}
where $V_s^i$ and $V_s^{i,N}$ solve \eqref{v1} and \eqref{v2}, respectively. By $\bf(H_2)$ and the estimates in Lemma \ref{gf1}, $h^i,h^{i,N}\in L^k(\OO\to\H,\P)$ are adapted. Set
$$
\boldsymbol h=(h^1,h^2,\cdots,h^N),\quad
\boldsymbol h^N=(h^{1,N},h^{2,N},\cdots,h^{N,N}).
$$
Since $X_t^i$ depends only on $W^i$, we write
\begin{align}\label{LD00}
U_t^i:=D_{h^i}X_t^i=D_{\boldsymbol h}X_t^i,\quad t\in[0,T],\,1\leq i\leq N.
\end{align}
For the interacting particle system, we write
\begin{align}\label{LD0}
U_t^{i,N}:=D_{\boldsymbol h^N}X_t^{i,N},\quad t\in[0,T],\,1\leq i\leq N.
\end{align}
When different Malliavin directions are considered, we write
$U_t^i=U_t^i(h^i)$ and $U_t^{i,N}=U_t^{i,N}(\boldsymbol h^N)$ to emphasize the dependence on the direction.

By the pathwise uniqueness of \eqref{E1} and \eqref{E2}, 
the shifted processes
$$
X_t^{h,i,\vv}:=F_t^i(X_0^i,W^i_\cdot+\vv h^i_\cdot),
\quad
X_t^{h,i,N,\vv}:=F_t^{i,N}(\boldsymbol X_0,\boldsymbol W_\cdot+\vv\boldsymbol h^N_\cdot)
$$
solve the SDEs with $X_0^{h,i,\vv}=X_0^{h,i,N,\vv}=X_0^i$,
\begin{align}\label{xhi}
\d X_t^{h,i,\vv}
=&\, b_t\bigl(X_t^{h,i,\vv},\L_{X_t^i}\bigr)\d t
+\sigma_t\bigl(X_t^{h,i,\vv}\bigr)\d(W_t^i+\vv h_t^i),
\quad t\in[0,T],
\end{align}
and
\begin{align*}
\d X_t^{h,i,N,\vv}
=&\, b_t\bigl(X_t^{h,i,N,\vv},\bar\mu_t^{h,N,\vv}\bigr)\d t
+\sigma_t\bigl(X_t^{h,i,N,\vv}\bigr)\d(W_t^i+\vv h_t^{i,N}),
\quad t\in[0,T],
\end{align*}
respectively, where
$$
\bar\mu_t^{h,N,\vv}:=\ff1N\sum_{j=1}^N\delta_{X_t^{h,j,N,\vv}}.
$$
Here the law $\L_{X_t^i}$ in \eqref{xhi} is frozen, since the Malliavin
derivative is taken with respect to the Brownian motion.

The following lemma describes the Malliavin derivative flows $U_t^i$ and $U_t^{i,N}$, which are associated with the directional derivative flows $V_t^i$ and $V_t^{i,N}$.

\begin{lem}\label{P3.1} 	
Assume {$\bf(H_1)$} and {$\bf(H_2)$}. For any
$X_0,\eta\in L^k(\Omega\to\R^d,\F_0,\P)$, with
$\boldsymbol{\eta}=(\eta^1,\eta^2,\cdots,\eta^N)$, any $0\leq r<T$ and
$g\in C^1([r,T])$ with $g_r=0$ and $g_T=1$, let $h^i$ and $h^{i,N}$ be defined as in \eqref{LLP1}. Then for any $1\le i\le N$,
$(U_t^i)_{t\in[0,T]}$ and $(U_t^{i,N})_{t\in[0,T]}$
exist in $L^k(\OO\to C([0,T];\R^d),\P)$ and are the unique solutions of
\begin{equation}\label{w1}
\left\{
\begin{aligned}
\d U_t^i
&=\left\{\nn_{U_t^i}b_t(X_t^i,\mu_t)+g_t'V_t^i\right\}\d t
+\nn_{U_t^i}\si_t(X_t^i)\d W_t^i,\quad t\in[r,T],\\
U_r^i&={\bf 0},
\end{aligned}
\right.
\end{equation}
and
\begin{equation}\label{DR3}
\left\{
\begin{aligned}
\d U_t^{i,N}
=&\left\{
\nn_{U_t^{i,N}} b_t(X_t^{i,N}, \hat\mu_t^N)
+g_t' V_t^{i,N}\right.\\
&\left.\quad
+\ff 1 N\sum_{j=1}^N
\<D^Lb_t(X_t^{i,N},\hat\mu_t^N)(X_t^{j,N}),U_t^{j,N}\>
\right\}\d t\\
&\quad+\nn_{U_t^{i,N}}\si_t(X_t^{i,N})\d W_t^i,\quad t\in[r,T],\\
U_r^{i,N}&={\bf 0},
\end{aligned}
\right.
\end{equation}
respectively. Moreover,
\begin{equation}\label{LD}
U_t^{i,N}=g_tV_t^{i,N},\quad t\in [r,T].
\end{equation}
Consequently,
\begin{equation}\label{LD1}
U_T^{i,N}=V_T^{i,N}.
\end{equation}
\end{lem}
\begin{proof}
For the process $(U_t^i=D_{h^i}X_t^i)_{1\leq i\leq N,t\in[0,T]}$, the desired conclusion follows by repeating the argument of \cite[Lemma 4.2, Theorem 6.1]{BRW20}. Hence, we only prove the assertions for the particle system \eqref{LD0}.

By  $\bf(H_1)$, $\bf(H_2)$, \eqref{ve2}, it is easy to see that $h^{i,N}\in L^k(\OO\to\H,\P)$. Moreover, since $(h_t^{i,N})'=0$ for $t<r$, it follows that $$
U_t^{i,N}=D_{\boldsymbol h^N}X_t^{i,N}={\bf 0},\quad t<r,\ 1\leq i\leq N.
$$
Note that the additional drift term generated by the Malliavin derivative in the direction $h^{i,N}$ is
$$
\si_t(X_t^{i,N})(h_t^{i,N})'
=
\left\{\si_t\si_t^*a_t^{-1}\right\}(X_t^{i,N})g_t'V_t^{i,N}
=
g_t'V_t^{i,N}.
$$
Then, as in Appendix~\ref{2.1}, we regard \eqref{E2} as a classical SDE on
$\R^{dN}$. By ${\bf(H_1)}$, ${\bf(H_2)}$, Lemma~\ref{de}, and the definition of $h^{i,N}$ in \eqref{LLP1}, the classical Malliavin differentiability result for finite-dimensional SDEs, see \cite{NN}, implies that
$$
U^{i,N}\in L^k(\OO\to C([0,T];\R^d),\P),\quad 1\leq i\leq N,
$$
and that $(U_t^{i,N})_{1\leq i\leq N}$ solves \eqref{DR3}.

Moreover, since $g_T=1$, \eqref{LD1} follows immediately from \eqref{LD}. Therefore, it remains to prove  \eqref{LD}. Set
$$
\widetilde V_t^{i,N}:=g_tV_t^{i,N},\quad t\in[r,T],\ 1\leq i\leq N.
$$Due to $g_r=0$, we have $\widetilde V_r^{i,N}={\bf 0}$. By \eqref{v2} and the fact that $\si_t$ is distribution independent under ${\bf(H_2)}$,
\begin{equation}\label{v20}
\beg{split}
\d \widetilde V_t^{i,N}
=&\bigg\{
\nn_{\widetilde V_t^{i,N}} b_t(X_t^{i,N},\hat\mu_t^N)
+g_t'V_t^{i,N}\\	&\quad
+\ff 1 N\sum_{j=1}^N
\<D^Lb_t(X_t^{i,N},\hat\mu_t^N)(X_t^{j,N}),\widetilde V_t^{j,N}\>
\bigg\}\d t\\	&\quad
+\nn_{\widetilde V_t^{i,N}}\si_t(X_t^{i,N})\d W_t^i,
\quad t\in[r,T].
\end{split}
\end{equation}
Thus $(\widetilde V_t^{i,N})_{1\leq i\leq N}$ satisfies the same equation \eqref{DR3} as $(U_t^{i,N})_{1\leq i\leq N}$ with the same initial value at $t=r$. By uniqueness,
$$
U_t^{i,N}=\widetilde V_t^{i,N}=g_tV_t^{i,N},\quad t\in[r,T],\ 1\leq i\leq N,
$$ which ends the proof.
\end{proof}
For $\boldsymbol h^N=(h^{1,N},h^{2,N},\cdots,h^{N,N})$ defined by \eqref{LLP1}, set
$$
\boldsymbol h^{N,(l)}=(0,\cdots,0,h^{l,N},0,\cdots,0),\quad 1\leq l\leq N,
$$
where $h^{l,N}$ is the $l$-th component. By the linearity of the Malliavin derivative,
\begin{align}\label{dhl}
U_t^{i,N}(\boldsymbol h^{N})
=\sum_{l=1}^N	U_t^{i,N}(\boldsymbol h^{N,(l)})
=:\sum_{l=1}^N U_t^{i,N,l},\quad
t\in[0,T],\ 1\leq i\leq N.
\end{align}
Similarly, when different Malliavin directions are considered, we write
$U_t^{i,N,l}=U_t^{i,N,l}(\boldsymbol h^N):=U_t^{i,N}(\boldsymbol h^{N,(l)})$ to emphasize the dependence on the direction.

In the next section, we shall approximate $U_t^1$ by $U_t^{1,N,1}$.  Therefore, it is useful to study each component separately.
\begin{prp}\label{uil}
Assume {$\bf(H_1)$} and {$\bf(H_2)$}. Then for any
$X_0,\eta\in L^k(\Omega\to\R^d,\F_0,\P)$, any $0\leq r<T$ and
$g\in C^1([r,T])$ with $g_r=0$ and $g_T=1$, the process 
$(U_t^{i,N,l})_{t\in[0,T], 1\leq i,l\leq N}$ exists in
$L^k(\OO\to C([0,T];\R^d),\P)$ and uniquely solves
\begin{equation}\label{DR31}
\left\{
\begin{aligned}
\d U_t^{i,N,l}
=&\left\{
\nn_{U_t^{i,N,l}} b_t(X_t^{i,N},\hat\mu_t^N)
+1_{\{l=i\}}g_t'V_t^{i,N}\right.\\
&\left.\quad
+\ff 1 N\sum_{j=1}^N
\<D^Lb_t(X_t^{i,N},\hat\mu_t^N)(X_t^{j,N}),U_t^{j,N,l}\>
\right\}\d t\\
&\quad+\nn_{U_t^{i,N,l}}\si_t(X_t^{i,N})\d W_t^i,\quad t\in[r,T],\\
U_r^{i,N,l}&={\bf 0},\quad 1\leq i,l\leq N.
\end{aligned}
\right.
\end{equation}
Moreover, there exists a constant $c>0$ independent of $N,i,l$, such that
\begin{align}\label{we3}
\E\left[\sup_{t\in[0,T]}|U_t^{i,N,l}|^k\right]
\leq c\left(1_{\{l=i\}}+\ff 1 N\right)\|\eta\|_k^k,\quad 1\leq i,l\leq N,
\end{align}
where $\|\eta\|_k^k=\E|\eta|^k$.
\end{prp}
\begin{proof}
The existence and uniqueness of solutions to \eqref{DR31} are classical once  \eqref{DR31} is regarded as an SDE on $\R^{dN}$. Hence, we only prove \eqref{we3}. 
Since $(h_t^{i,N})'=0$ for $t<r$, we still have $$
U_t^{i,N,l}={\bf 0},\quad t<r,\ 1\leq i,l\leq N.
$$
Thus,	without loss of generality, we consider \eqref{DR31} with  $r=0$. 

By It\^o's formula, Young's and H\"older's inequalities, and assumption {$\bf(H_1)$}, there exists a constant $c_1>0$ such that for any $t\in[0,T]$ and $1\leq i,l\leq N$,
\begin{align*}
\d |U_t^{i,N,l}|^2
=&\d M_t^{i,l}
+2\<U_t^{i,N,l},\nn_{U_t^{i,N,l}} b_t(X_t^{i,N},\hat\mu_t^N)\>\d t
+\Big\|\nn_{U_t^{i,N,l}}\si_t(X_t^{i,N})\Big\|_{HS}^2\d t\\
&+2 \cdot 1_{\{l=i\}}g_t'\<U_t^{i,N,l},V_t^{i,N}\>\d t\\
&+\ff 2N\sum_{j=1}^N
\left\<U_t^{i,N,l},
\<D^Lb_t(X_t^{i,N},\hat\mu_t^N)(X_t^{j,N}),U_t^{j,N,l}\>
\right\>\d t\\
\leq&\d M_t^{i,l}
+c_1\left\{|U_t^{i,N,l}|^2
+|V_t^{i,N}|^2 1_{\{l=i\}}
+\left( \ff1 N\sum_{j=1}^N|U_t^{j,N,l}|\right) ^2\right\}\d t,
\end{align*}
where $$
\d\<M^{i,l}\>_t\leq c_1|U_t^{i,N,l}|^4\d t.
$$  
Using It\^o's formula once more and applying Young's inequality, there exists a constant $c_2>0$ such that
\begin{align}\label{ucomp1}
\d |U_t^{i,N,l}|^k
\leq c_2\left(|U_t^{i,N,l}|^k+1_{\{l=i\}}|V_t^{i,N}|^k+\ff1N\sum_{j=1}^N|U_t^{j,N,l}|^k\right)\d t+\d \widetilde M_t^{i,l},
\end{align}
where 
$$
\d\<\widetilde M^{i,l}\>_t\leq c_2^2|U_t^{i,N,l}|^{2k}\d t.
$$
For $t\in[0,T]$, set
$$
H_t^{i,l}:=\E\left[\sup_{s\in[0,t]}|U_s^{i,N,l}|^k\right].
$$
By  Young's and the Burkholder--Davis--Gundy inequalities, we can find a constant $c_3>0$ such that for any $1\le i,l\le N$ and $t\in[0,T]$,
\begin{equation}
\begin{aligned}\label{wm1}
H_t^{i,l}\leq & c_2\int_0^t\E\bigg[|U_s^{i,N,l}|^k+1_{\{l=i\}}|V_s^{i,N}|^k+\ff1N\sum_{j=1}^N|U_s^{j,N,l}|^k\bigg]\,\d s\\\ &+c_2\E\left(\int_0^t |U_s^{i,N,l}|^{2k}\,\d s\right)^{\ff 1 2}\\
\leq& c_3\int_0^t\bigg(H_s^{i,l}+\ff1N\sum_{j=1}^NH_s^{j,l}
+\E\left[|V_s^{i,N}|^k1_{\{l=i\}}\right]\bigg)\,\d s
+\ff12H_t^{i,l}.
\end{aligned}
\end{equation}
Taking the average over $i$ on both sides and using \eqref{ve2}, it follows 
\begin{align*}
\ff1N\sum_{i=1}^N H_t^{i,l}
&\leq 4c_3\int_0^t\ff1N\sum_{i=1}^N H_s^{i,l}\,\d s
+2c_3\int_0^t\ff1N\E|V_s^{l,N}|^k\,\d s\\
&\leq 4c_3\int_0^t\ff1N\sum_{i=1}^N H_s^{i,l}\,\d s
+\ff{c_4}{N}\|\eta\|_k^k,\quad 1\leq l\leq N,\,t\in[0,T]
\end{align*}
for some constant  $c_4>0$. Then by Gr\"onwall's inequality, 
\begin{align*}
\ff 1 N\sum_{i=1}^N	H_t^{i,l}\leq \ff {c_4\e^{4c_3T}} N\|\eta\|_k^k, \  \ 1\leq l\leq N,\,t\in[0,T].
\end{align*}
Plugging this into \eqref{wm1},  using \eqref{ve2} and Gr\"onwall's inequality once more, we can find a constant $c_5>0$ such that
\begin{align*}
H_t^{i,l}\leq c_5\left( 1_{\{l=i\}}+\ff 1 N\right)\|\eta\|_k^k, \ \ 1\leq i,l\leq N,\,t\in[0,T],
\end{align*}
which completes the proof.
\end{proof}

\section{Propagation of chaos  for derivative  flows}\label{apd}
In this section, we first approximate the directional derivative flow $V_t^1$ by $V_t^{1,N}$ for $t\in[0,T]$. Based on this approximation, we then derive estimates for the convergence of the corresponding Malliavin derivative flows $U_t^1$ and $U_t^{1,N,1}$. It is worth noting that we also obtain explicit convergence rates, which can be sharp in some cases.

In addition to {$\bf(H_1)$}, we need the following uniform continuity assumption on the first-order derivatives of the coefficients.
\begin{enumerate}
\item[$\bf(H_3)$]
$b$ and $\si$ satisfy
\begin{equation*}
\begin{aligned}
&\lim_{\vv\downarrow0}
\sup_{\substack{
	t\in[0,T],\,x,x',y,y'\in\R^d,\,\mu,\nu\in\scr P_k\\
	|x-x'|\vee \W_k(\mu,\nu)\vee |y-y'|\leq\vv}}
	\Big\{
	\|\nn b_t(x,\mu)-\nn b_t(x',\nu)\|
	+\|\nn\si_t(x,\mu)-\nn\si_t(x',\nu)\|\\
	&\quad\quad
	+\|D^Lb_t(x,\mu)(y)-D^Lb_t(x',\nu)(y')\|
	+\|D^L\si_t(x,\mu)(y)-D^L\si_t(x',\nu)(y')\|
	\Big\}=0.
\end{aligned}
\end{equation*}
\end{enumerate}
To obtain convergence rates, we impose the following quantitative version of {$\bf(H_3)$}.
\begin{enumerate}
\item[$\bf(H_4)$]
Let $q>k\geq2$. There exist constants $K>0$, $\alpha\in(0,1]$ and $0\leq \rho\leq q(k-\alpha)$ such that for any
$t\in[0,T]$, $x,x',y,y'\in\R^d$ and $\mu,\nu\in\scr P_q$,
\begin{equation}\label{l4}
\begin{aligned}
&\|\nn b_t(x,\mu)-\nn b_t(x',\nu)\|
+\|\nn\si_t(x,\mu)-\nn\si_t(x',\nu)\|\\
&\leq K\left(1+|x|^{\ff \rho k}+|x'|^{\ff \rho k}
+\|\mu\|_q^{\ff \rho k}+\|\nu\|_q^{\ff \rho k}\right)
\left(|x-x'|+\W_k(\mu,\nu)\right)^\alpha,
\end{aligned}
\end{equation}
and
\begin{equation}\label{l5}
\begin{aligned}
&\|D^Lb_t(x,\mu)(y)-D^Lb_t(x',\nu)(y')\|
+\|D^L\si_t(x,\mu)(y)-D^L\si_t(x',\nu)(y')\|\\
&\leq K\left(1+|x|^{\ff \rho k}+|x'|^{\ff \rho k}
+|y|^{\ff \rho k}+|y'|^{\ff \rho k}
+\|\mu\|_q^{\ff \rho k}+\|\nu\|_q^{\ff \rho k}\right)\\
&\quad\times
\left(|x-x'|+|y-y'|+\W_k(\mu,\nu)\right)^\alpha.
\end{aligned}
\end{equation}
Here and in what follows, for $\mu\in\scr P_q$, we write
$\|\mu\|_q:=\mu(|\cdot|^q)^{\ff1q}$.
\end{enumerate}
We first establish the propagation of chaos for the directional derivative flows.
\begin{thm}\label{gf2}
Assume {$\bf(H_1)$} and {$\bf(H_3)$}. Then the following assertions hold.
\begin{enumerate}
\item[(1)] For any $X_0,\eta\in L^k(\Omega\to\R^{d},\F_0,\P)$,
\begin{align}\label{ap1}
\lim_{N\to\infty}
\E\left[\sup_{t\in[0,T]}\big|V_t^{1,N}-V_t^1\big|^k\right]=0.
\end{align}
\item[(2)] If further assume ${\bf (H_4)}$, then for any
$X_0,\eta\in L^{q}(\Omega\to\R^{d},\F_0,\P)$, there exists a constant $c>0$  independent of $N$, such that
\begin{align}\label{ap2}
\E\left[\sup_{t\in[0,T]}\big|V_t^{1,N}-V_t^1\big|^k\right]
\leq c\epsilon(N)^{\alpha\left( \ff{q-k}{\rho+\alpha q}\wedge 1\right)},
\end{align}
where $\epsilon(N)$ is defined in \eqref{en}.
\end{enumerate}
\end{thm}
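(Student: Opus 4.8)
The plan is to write down the linear SDEs satisfied by the difference of the two directional derivative flows and run a Gr\"onwall argument on its $L^k$-norm, exactly in the spirit of the proofs of Lemma \ref{gf1} and of the Proposition above (formulas \eqref{ve2}--\eqref{we3}), with the extra ``error'' terms controlled by a soft uniform-integrability argument for part (1) and by quantitative interpolation against the higher moments for part (2). First I would set $\Delta_t^i:=v_t^{\boldsymbol\eta,i,N}-v_t^{\eta,i}$; since $(X_0^j,\eta^j,W^j)_{1\le j\le N}$ are i.i.d.\ the family $(\Delta^i)_{1\le i\le N}$ is exchangeable, so $\phi(t):=\E[\sup_{s\le t}|\Delta_s^i|^k]$ is independent of $i$ and $\tfrac1N\sum_j\E[\sup_{s\le t}|\Delta_s^j|^k]=\phi(t)$; thus it suffices to estimate $\phi(T)$.

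Subtracting \eqref{v2} from \eqref{v1} and writing $\nn_v b=(\nn b)v$, the drift of $\d\Delta_t^i$ decomposes into: (i) $\nn_{\Delta_t^i}b_t(X_t^{i,N},\hat\mu_t^N)$ and $\tfrac1N\sum_j\<D^Lb_t(X_t^{i,N},\hat\mu_t^N)(X_t^{j,N}),\Delta_t^j\>$, both bounded by $C(|\Delta_t^i|+\tfrac1N\sum_j|\Delta_t^j|)$ by $\bf(H_1)$ and hence Gr\"onwall-compatible; (ii) a coefficient error $(\nn b_t(X_t^{i,N},\hat\mu_t^N)-\nn b_t(X_t^i,\mu_t))v_t^{\eta,i}$ plus $\tfrac1N\sum_j\<D^Lb_t(X_t^{i,N},\hat\mu_t^N)(X_t^{j,N})-D^Lb_t(X_t^i,\mu_t)(X_t^j),v_t^{\eta,j}\>$; and (iii) a law-of-large-numbers error $\tfrac1N\sum_j\<D^Lb_t(X_t^i,\mu_t)(X_t^j),v_t^{\eta,j}\>-\E[\<D^Lb_t(x,\mu_t)(X_t^1),v_t^{\eta,1}\>]\big|_{x=X_t^i}$. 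The diffusion coefficient of $\d\Delta_t^i$ has exactly the same structure with $\si$ in place of $b$ (here $\si$ may depend on the measure, which is allowed in this theorem). Applying It\^o's formula to $|\Delta_t^i|^k$, Burkholder--Davis--Gundy and Young's inequality as in \eqref{ve2}--\eqref{we3}, averaging over $i$ and using exchangeability, one arrives at
\[
\phi(t)\le C\int_0^t\phi(s)\,\d s+C\sup_{s\le T}\big(\E|E_s^1|^k+\E\|F_s^1\|^k\big),\qquad t\in[0,T],
\]
where $E^1$ and $F^1$ collect the errors (ii)+(iii) for $b$ and for $\si$ respectively; Gr\"onwall's lemma then reduces the whole theorem to bounding $\sup_{s\le T}(\E|E_s^1|^k+\E\|F_s^1\|^k)$.

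For the errors, the term (iii) is, conditionally on $X_t^i$, a normalized sum of i.i.d.\ centred terms bounded in $L^k$ by $\bf(H_1)$, \eqref{l3} and \eqref{ve1}, so by Marcinkiewicz--Zygmund (or Rosenthal) it is $O(N^{-k/2})$ in $L^k$, which is negligible against $\epsilon(N)^{\alpha(\frac{q-k}{m+\alpha q}\wedge1)}$ because $\epsilon(N)\gtrsim N^{-1/2}$ in all cases of \eqref{en}. For (ii) in case (1), $\bf(H_3)$ forces the bracketed differences to vanish in probability along \eqref{ap01} while staying uniformly bounded, and $\sup_{t}|v_t^{\eta,j}|^k$ is an integrable majorant by \eqref{ve1}; splitting over a $\delta$-neighbourhood and its complement, Markov's inequality together with uniform integrability gives $\sup_{s\le T}\E|E_s^1|^k\to0$ (and likewise for $F^1$, using \eqref{l3} for $\si$), which proves \eqref{ap1}. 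In case (2), $\bf(H_4)$ bounds each bracket by $K(1+\text{polynomial of degree }m/k\text{ in the positions and }\|\cdot\|_q)\,Z_t^{\alpha}$ with $Z_t=|X_t^{i,N}-X_t^i|+|X_t^{j,N}-X_t^j|+\W_k(\hat\mu_t^N,\mu_t)$; by Lemma \ref{poc} one has $\sup_{t\le T}\E[Z_t^k]\le C\epsilon(N)$, while $\sup_{t\le T}\E[Z_t^q]$, the $q$-th moments of the positions, and $\E[\sup_{t\le T}|v_t^{\eta,j}|^q]$ are all finite under $X_0,\eta\in L^q$ (Lemma \ref{poc}, Lemma \ref{gf1}). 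Distributing the moment budget $q$ by H\"older's inequality among the polynomial factor, the derivative-flow factor $|v_t^{\eta,j}|^k$, and the small factor $Z_t^{\alpha k}$, and then interpolating the resulting $\|Z_t\|_{s}$ ($s\in[k,q]$) between $\|Z_t\|_k\lesssim\epsilon(N)^{1/k}$ and the bounded $\|Z_t\|_q$, yields $\sup_{s\le T}\E|E_s^1|^k\le c\,\epsilon(N)^{\alpha(\frac{q-k}{m+\alpha q}\wedge1)}$ (and the same for $F^1$); here the exponent $\alpha(\frac{q-k}{m+\alpha q}\wedge1)$ is precisely the optimum of the split, at which the gain $\alpha$ from the H\"older exponent is balanced against the loss caused by the polynomial degree $m$ and the moment margin $q-k$. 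Combining this with the Gr\"onwall estimate gives \eqref{ap2}.

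The main obstacle is this last estimate of $E^1$ (and $F^1$) in case (2): one must pull out the sharp rate $\epsilon(N)$ coming from the $L^k$-convergence of Lemma \ref{poc} while simultaneously absorbing the \emph{unbounded} polynomial H\"older constants of $\bf(H_4)$ using only the $L^q$ moments available, and the trade-off between these two requirements is exactly what produces the exponent in \eqref{ap2}; a secondary, more routine, point is checking that the quadratic-variation contributions of the measure-dependent $\si$ and the $O(N^{-1/2})$ law-of-large-numbers error never beat this rate, which uses the case analysis in \eqref{en}.
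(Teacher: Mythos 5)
Your overall architecture is exactly the paper's: the difference process $\Delta^i$ (the paper's $\Phi^{i,N}$), the splitting of the drift/diffusion error into a Gr\"onwall-compatible part, a coefficient-error part and a law-of-large-numbers part (the paper's $\xi,\zeta,\chi,\psi$ in \eqref{coap}), the It\^o--BDG--Young--exchangeability Gr\"onwall reduction \eqref{ph1}--\eqref{ph3}, and, for part (1), the uniform-integrability/modulus-of-continuity argument under $\bf(H_3)$. Your Marcinkiewicz--Zygmund bound $O(N^{-k/2})$ for the LLN term is a correct substitute for the paper's estimate \eqref{psi2}, and your observation that it is dominated by $\epsilon(N)^{\alpha(\frac{q-k}{m+\alpha q}\wedge 1)}$ because $\epsilon(N)\ge N^{-1/2}$ is sound. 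Part (1) is therefore essentially the paper's proof.

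There is, however, a genuine gap in part (2): the mechanism you describe does not yield the stated exponent. If you H\"older the budget $q$ among $|v|^k$ (cost $k/q$), the polynomial factor of degree $m$ (cost $m/q$) and $Z^{\alpha k}$, and then interpolate $\|Z\|_s$, $s\in[k,q]$, between $\|Z\|_k\lesssim\epsilon(N)^{1/k}$ and the merely bounded $\|Z\|_q$, the optimal outcome of that split is $\epsilon(N)^{\frac{q-k-m-\alpha k}{q-k}}$ when $q>k+m+\alpha k$, and the allocation is infeasible otherwise (e.g.\ $k=2$, $q=3$, $m=0$, $\alpha=1$: the constraints $\tfrac1a\ge k/q$, $\tfrac1b\ge m/q$, $\tfrac1c\ge\alpha$ cannot sum to $1$, so your scheme gives no rate, while the theorem claims $\epsilon(N)^{1/3}$); even when feasible this exponent is strictly smaller than $\alpha\beta$ with $\beta=\frac{q-k}{m+\alpha q}\wedge1$ (e.g.\ $m=0$, $\alpha=1$, $k=2$, $q=10$: $3/4$ versus $4/5$). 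The missing idea is the uniform boundedness, under $\bf(H_1)$, of the derivative differences $J_t^{j,N}$: the paper first writes $(J_t^{j,N})^{\frac{kq}{q-k}}\le C\wedge\big\{A^{\frac m{q-k}}B^{\frac{\alpha q}{q-k}}\big\}$, with $A$ the $q$-th-moment polynomial and $B$ the sum of $k$-th powers of the distances, and then uses $C\wedge x\le C^{1-\beta}x^{\beta}$ to lower both powers by exactly $\beta$ before a two-factor H\"older against $\E B\lesssim\epsilon(N)$; this truncation/power-reduction step is precisely where the exponent $\beta=\frac{q-k}{m+\alpha q}\wedge1$ originates, and without it your interpolation loses a strictly positive amount whenever $\beta<1$. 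You would need to add this capping argument (or an equivalent use of the boundedness of $\nn b,\nn\si,D^Lb,D^L\si$) to reach \eqref{ap2}.
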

We then transfer the preceding convergence to the Malliavin derivative flows.
\begin{thm}\label{gf3}
Assume {$\bf(H_1)$}, {$\bf(H_2)$} and {$\bf(H_3)$}. Then, for any $0\leq r<T$ and
$g\in C^1([r,T])$ with $g_r=0$ and $g_T=1$, the following assertions hold.
\begin{enumerate}
\item[(1)] For any $X_0,\eta\in L^k(\Omega\to\R^{d},\F_0,\P)$,
\begin{align}\label{ap3}
\lim_{N\to\infty}
\E\left[\sup_{t\in[0,T]}\big|U_t^{1,N,1}-U_t^1\big|^k\right]=0.
\end{align}

\item[(2)] If further assume ${\bf (H_4)}$, then for any
$X_0,\eta\in L^{q}(\Omega\to\R^{d},\F_0,\P)$, there exists a constant $c>0$ independent of $N$, such that
\begin{align}\label{ap4}
\E\left[\sup_{t\in[0,T]}\big|U_t^{1,N,1}-U_t^1\big|^k\right]
\leq c\epsilon(N)^{\alpha\left( \ff{q-k}{\rho+\alpha q}\wedge 1\right)}.
\end{align}
\end{enumerate}
\end{thm}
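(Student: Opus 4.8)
The plan is to run the same It\^o--Gr\"onwall scheme as in the proof of Theorem \ref{gf2}, this time comparing the linear SDE \eqref{w1} for $w_t^{h,1}=D_{h^1}X_t^1$ with the SDE \eqref{DR31} taken at $i=l=1$ for $w_t^{\boldsymbol{h}^N,1,N,(1)}=D_{\boldsymbol{h}^{N,(1)}}X_t^{1,N}$. Both flows exist in $L^k(\OO\to\C_T,\P)$ by Lemma \ref{P3.1} and \eqref{DR31}, and both vanish for $t<r$, so it suffices to estimate $\Theta_t^N:=w_t^{h,1}-w_t^{\boldsymbol{h}^N,1,N,(1)}$ on $[r,T]$, starting from $\Theta_r^N={\bf 0}$. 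Using the linearity of $v\mapsto\nn_v$ and substituting $w_t^{\boldsymbol{h}^N,1,N,(1)}=w_t^{h,1}-\Theta_t^N$ inside the coefficient discrepancies, so that these act on the $N$-independent limit $w_t^{h,1}$, one gets
\begin{align*}
\d\Theta_t^N=&\Big\{\nn_{\Theta_t^N}b_t(X_t^{1,N},\hat\mu_t^N)+\Xi_t^N+g_t'\big(v_t^{\eta,1}-v_t^{\boldsymbol\eta,1,N}\big)-R_t^N\Big\}\,\d t\\
&+\Big\{\nn_{\Theta_t^N}\si_t(X_t^{1,N})+\Upsilon_t^N\Big\}\,\d W_t^1,
\end{align*}
with $\Xi_t^N:=\big[\nn b_t(X_t^1,\mu_t)-\nn b_t(X_t^{1,N},\hat\mu_t^N)\big]w_t^{h,1}$, $\Upsilon_t^N:=\big[\nn\si_t(X_t^1)-\nn\si_t(X_t^{1,N})\big]w_t^{h,1}$, and $R_t^N:=\ff1N\sum_{j=1}^N\<D^Lb_t(X_t^{1,N},\hat\mu_t^N)(X_t^{j,N}),w_t^{\boldsymbol{h}^N,j,N,(1)}\>$ the extra mean-field term carried by \eqref{DR31} but absent from \eqref{w1}. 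It\^o's formula for $|\Theta_t^N|^k$, the Burkholder--Davis--Gundy and Young inequalities, and the $\bf(H_1)$-bounds $\|\nn b_t\|,\|\nn\si_t\|\le C$ from \eqref{l1'} (which let the terms $\nn_{\Theta_t^N}b_t(X_t^{1,N},\hat\mu_t^N)$ and $\nn_{\Theta_t^N}\si_t(X_t^{1,N})$ be absorbed by Gr\"onwall) give
\[
\E\Big[\sup_{t\in[0,T]}|\Theta_t^N|^k\Big]\le c\int_0^T\E\big[\,|\Xi_t^N|^k+\|\Upsilon_t^N\|^k+|g_t'|^k\,|v_t^{\eta,1}-v_t^{\boldsymbol\eta,1,N}|^k+|R_t^N|^k\,\big]\,\d t,
\]
and the whole proof reduces to bounding these four error terms.

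The term involving $v_t^{\eta,1}-v_t^{\boldsymbol\eta,1,N}=\nn_{\eta^1}X_t^1-\nn_{\boldsymbol\eta}X_t^{1,N}$ is handled directly by Theorem \ref{gf2}: part (1) gives $\lim_N\E[\sup_t|v_t^{\eta,1}-v_t^{\boldsymbol\eta,1,N}|^k]=0$, and, writing $\beta:=\ff{q-k}{m+\alpha q}\wedge1$, part (2) gives the bound $c\,\epsilon(N)^{\alpha\beta}$. For $R_t^N$, the uniform bound $\|D^Lb\|\le C$ from \eqref{l3} together with Jensen's inequality yield $|R_t^N|^k\le C^k\,\ff1N\sum_{j=1}^N|w_t^{\boldsymbol{h}^N,j,N,(1)}|^k$, so by the component estimate \eqref{we3} (with $l=1$),
\[
\E|R_t^N|^k\le\ff{C^k}N\Big(\E|w_t^{\boldsymbol{h}^N,1,N,(1)}|^k+\sum_{j=2}^N\E|w_t^{\boldsymbol{h}^N,j,N,(1)}|^k\Big)\le\ff{c}{N}\,\|\eta\|_k^k .
\]
Since $\epsilon(N)\ge N^{-1/2}$ by \eqref{en} and $\alpha\beta\le1$, one has $N^{-1}\le\epsilon(N)^{\alpha\beta}$, so this $O(N^{-1})$ term is negligible for both parts.

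Finally $\Xi_t^N$ and $\Upsilon_t^N$ are the simpler analogues of the $\nn b$- and $\nn\si$-difference parts of $\xi_t^{1,N},\chi_t^{1,N}$ in the proof of Theorem \ref{gf2}, the \emph{single, $N$-independent} random variable $w_t^{h,1}$ (in $L^k$ by Lemma \ref{P3.1}, and in $L^q$ when $\eta\in L^q$, e.g.\ from \eqref{w1} and \eqref{ve1}) now playing the role of the $N$-dependent $v_t^j$'s there. For part (1): by $\bf(H_3)$ the matrix discrepancies $\nn b_t(X_t^1,\mu_t)-\nn b_t(X_t^{1,N},\hat\mu_t^N)$ and $\nn\si_t(X_t^1)-\nn\si_t(X_t^{1,N})$ are bounded and converge to ${\bf 0}$ in probability, since $\E|X_t^1-X_t^{1,N}|^k\to0$ and $\E\big[\W_k(\hat\mu_t^N,\mu_t)^k\big]\to0$ by \eqref{ap01} of Lemma \ref{poc}; dominated convergence with dominating function $c\,|w_t^{h,1}|^k\in L^1$ then gives $\E[|\Xi_t^N|^k+\|\Upsilon_t^N\|^k]\to0$, and Gr\"onwall's lemma yields \eqref{ap3}. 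For part (2): using the polynomial-growth bound \eqref{l4} of $\bf(H_4)$, H\"older's inequality with the $L^q$-moment bounds \eqref{MM1}, \eqref{MM2} for $X_t^{1,N}$ and the $L^q$-bound for $w_t^{h,1}$, Jensen's inequality and the quantitative estimate \eqref{ap02}, one obtains $\E[|\Xi_t^N|^k+\|\Upsilon_t^N\|^k]\le c\,\epsilon(N)^{\alpha\beta}$, the exponent $\alpha\beta$ appearing through exactly the balancing performed in the derivation of \eqref{psi1'}. Combining the four bounds with the Gr\"onwall estimate proves \eqref{ap4}.

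The main obstacle is the extra interaction term $R_t^N$, which has no counterpart in the McKean--Vlasov linearization \eqref{w1}: perturbing $X_t^{1,N}$ along the single Cameron--Martin direction $\boldsymbol{h}^{N,(1)}$ still produces a full mean-field coupling $\ff1N\sum_{j}\<D^Lb_t(\cdot)(X_t^{j,N}),w_t^{\boldsymbol{h}^N,j,N,(1)}\>$, and its smallness is \emph{not} visible termwise — each of the $N$ summands with $j\ne1$ is only of order $N^{-1/k}$; the resolution is the sharp component bound \eqref{we3}, which forces the $L^k$-norm of the whole average down to $O(N^{-1})$. A second, essential, point is to let the coefficient discrepancies act on the $N$-independent limit $w_t^{h,1}$ rather than on $w_t^{\boldsymbol{h}^N,1,N,(1)}$ (whose $L^k$-bound is only uniform in $N$, too weak for the uniform-integrability step of Theorem \ref{gf2}(1)); the resulting $\Theta_t^N$-remainder is harmlessly absorbed into the Gr\"onwall term via the boundedness of $\nn b$ and $\nn\si$.
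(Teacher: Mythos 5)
Your proposal is correct and follows essentially the same route as the paper: you compare \eqref{w1} with \eqref{DR31} at $i=l=1$, split the error into the coefficient discrepancies acting on the $N$-independent $w_t^{h,1}$, the term $g_t'(v_t^{\eta,1}-v_t^{\boldsymbol\eta,1,N})$ controlled by Theorem \ref{gf2}, and the extra mean-field remainder controlled via \eqref{l3}, Jensen and \eqref{we3} at rate $O(N^{-1})$, then close with the It\^o--BDG--Gr\"onwall scheme — exactly the paper's decomposition into $\tilde\xi_t^{1,N}$, $\tilde\chi_t^{1,N}$, $\tilde\zeta_t^{1,N}$. No gaps noted.
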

\begin{rem}	
Similar to  Remark~\ref{exc}, these estimates are independent of the choice of the particle label. Hence the estimates in \eqref{ap1}--\eqref{ap4}, stated for $i=1$, also hold with $1$ replaced by any $1\leq i\leq N$. 
\end{rem}
\begin{rem}\label{sharp}
\begin{enumerate}
\item[(1)]
Assumption ${\bf(H_4)}$ does not require the first-order derivatives of the coefficients
to be globally $\alpha$-H\"older continuous with a uniform H\"older constant.
Instead, it allows the corresponding H\"older constants to grow polynomially
in the spatial variables. This is the role of the parameter $\rho$.
\item[(2)]
For any $k\geq2$, the above convergence rate we obtained  improves as $\alpha$ and $q$ increase and as $\rho$ decreases.

In particular, for any fixed $q>k\geq2$, the optimal rate in our framework is obtained when $\rho=0,\,\alpha=1$, that is, there exists a constant $\tt K>0$ such that 
\begin{align*}
&\|\nn b_t(x,\mu)-\nn b_t(x',\nu)\|+\|\nn\si_t(x,\mu)-\nn\si_t(x',\nu)\|\\
&+\|D^Lb_t(x,\mu)(y)-D^Lb_t(x',\nu)(y')\|
+\|D^L\si_t(x,\mu)(y)-D^L\si_t(x',\nu)(y')\|\\
\leq &\tt K \left\{|x-x'|+|y-y'|+\W_k(\mu,\nu)\right\},\ \ t\in[0,T],\,x,x',y,y'\in\R^d,\,\mu,\nu\in\scr P_q,
\end{align*}
in which case the rate becomes $\epsilon(N)^{1-\ff{k}{q}}$.
If moreover, choosing $q=\8$, that is $X_0,\eta\in L^{\8}(\Omega\to\R^{d},\F_0,\P)$,  the estimate reduces to the sharp rate 
\begin{align*}\epsilon(N)=\begin{cases}
N^{- 1 /2}, &\textrm{if } k>d/2,\\
N^{- 1 /2}  \log (1+N), & \textrm{if } k=d/2,\\
N^{-k/{d}}, & \textrm{if } k\in(0,d/2),\end{cases}
\end{align*}
matching the standard rate for the original particle system in \eqref{ap02}.
\end{enumerate}
\end{rem}
To illustrate the above results, we present below an example of $b$ satisfying {$\bf(H_1)$}, {$\bf(H_3)$} and ${\bf (H_4)}$. The same construction also applies to   $\sigma$.
\begin{exa}\label{e1}
Let
$$
b_t(x,\mu)= \int_{\R^d}\mathfrak b_t(x,y)\,\mu(\d y),
\quad (t,x,\mu)\in[0,T]\times\R^d\times\scr P_k,
$$
where $\mathfrak b: [0,T]\times \R^d\times\R^d\to \R^d$ is measurable satisfying 
$$\sup_{t\in[0,T],\,x,y\in\R^d}
\left\{
|\mathfrak b_t({\bf0},{\bf0})|
+\|\nn_x \mathfrak b_t (x,y)\|
+\|\nn_y \mathfrak b_t (x,y)\|
\right\}<\infty,$$
and  that  $\nn_x \mathfrak b_t (x,y)$, $\nn_y \mathfrak b_t (x,y)$ are uniformly continuous in $(x,y)\in\R^d\times\R^d$   uniformly in $t\in[0,T]$.
Then $b$ satisfies   {$\bf(H_1)$}, {$\bf(H_3)$}. If moreover 
\begin{equation}\label{ex4}
\begin{aligned} 
&\|\nn_x \mathfrak b_t(x,y)-\nn_x \mathfrak b_t(x',y')\|
+\|\nn_y \mathfrak b_t(x,y)-\nn_y\mathfrak b_t(x',y')\|\\
&\leq K\left(1+|x|^{\ff\rho k}+|x'|^{\ff\rho k}
+|y|^{\ff\rho k}+|y'|^{\ff\rho k}\right)
\left(|x-x'|+|y-y'|\right)^\alpha,
\end{aligned}
\end{equation}	holds for   some $\alpha\in(0,1]$, $0\leq \rho\leq q(k-\alpha)$ and for any $t\in[0,T],\,x,x',y,y'\in\R^d$,
then  $b$ satisfies ${\bf(H_4)}$. \end{exa}

\begin{proof} 		
Since for any  $(t,x,y,\mu)\in[0,T]\times\R^d\times\R^d\times\scr P_k$,
\begin{align}\label{ex1}
\nn b_t(x,\mu)=\int_{\R^d}\nn_x \mathfrak b_t(x,y)\,\mu(\d y),
\end{align}
and
\begin{align}\label{ex2}
D^Lb_t(x,\mu)(y)=\nn_y \mathfrak b_t(x,y),
\end{align}
$b$ satisfies   ${\bf(H_1)}$.

Next,  for any $  t\in[0,T],\,x,x'\in\R^d$, and $\mu,\nu\in\scr P_k$,
\begin{equation}\label{ex3}
\begin{aligned}
&\left\|\int_{\R^d}\nn_x \mathfrak b_t(x,y)\,\mu(\d y)
-\int_{\R^d}\nn_x \mathfrak b_t(x',y)\,\nu(\d y)\right\|\\
&\leq \inf_{\pi\in\C(\mu,\nu)}\int_{\R^d\times\R^d}
\|\nn_x \mathfrak b_t(x,y)-\nn_x \mathfrak b_t(x',z)\|\,\pi(\d y,\d z).
\end{aligned}
\end{equation}
Since $\nn_x \mathfrak b$ is bounded and uniformly continuous, the right-hand side converges to $0$ uniformly whenever $|x-x'|+\W_k(\mu,\nu)\to0$. The same argument can apply to $D^Lb_t(x,\mu)(y)=\nn_y\mathfrak b_t(x,y)$. Hence ${\bf(H_3)}$ holds for $b$.

Moreover,  let $\rho>0$.
For any ${\pi\in\C(\mu,\nu)}$, H\"older's and Jensen's inequalities give
\begin{align*}
&\int_{\R^d\times\R^d}
\left(1+|x|^{\ff\rho k}+|x'|^{\ff\rho k}+|y|^{\ff\rho k}+|z|^{\ff\rho k}\right)
\left(|x-x'|+|y-z|\right)^\alpha\,\pi(\d y,\d z)\\
&\leq\left( \int_{\R^d\times\R^d}
\left(1+|x|^{\ff\rho k}+|x'|^{\ff\rho k}+|y|^{\ff\rho k}+|z|^{\ff\rho k}\right)^{\ff k{k-\alpha}}\,\pi(\d y,\d z)\right) ^{\ff {k-\alpha}k}
\\&\quad\times\left( \int_{\R^d\times\R^d}
\left(|x-x'|+|y-z|\right)^k\,\pi(\d y,\d z)\right) ^{\ff \alpha k}\\
&\leq\left( \int_{\R^d\times\R^d}
\left(1+|x|^{\ff\rho k}+|x'|^{\ff\rho k}+|y|^{\ff\rho k}+|z|^{\ff\rho k}\right)^{\ff {kq}{\rho}}\,\pi(\d y,\d z)\right) ^{\ff \rho{kq}}
\\&\quad\times\left( \int_{\R^d\times\R^d}
\left(|x-x'|+|y-z|\right)^k\,\pi(\d y,\d z)\right) ^{\ff \alpha k}\\
&\leq
C\left(1+|x|^{\ff\rho k}+|x'|^{\ff\rho k}+\|\mu\|_q^{\ff\rho k}+\|\nu\|_q^{\ff\rho k}\right)
\left(|x-x'|+\left( \int_{\R^d\times\R^d}
|y-z|^k\,\pi(\d y,\d z)\right) ^{\ff 1 k}\right)^\alpha,
\end{align*}
for some constant $C>0$,	where we used $\rho\leq q(k-\alpha)$. Taking the infimum over $\pi\in\C(\mu,\nu)$, and combining with \eqref{ex4}--\eqref{ex3} yields ${\bf(H_4)}$ for $b$. 

Finally, when $\rho=0$,  ${\bf(H_4)}$ for $b$ follows directly from  \eqref{ex4} and \eqref{ex3}. This  corresponds to the usual globally $\alpha$-H\"older continuous case.
\end{proof}

Recall that when different Malliavin directions are considered,  for any $t\in[0,T]$, we denote 
$$U_t^1(h^1)=D_{h^1}X_t^1,\qquad
U_t^{1,N}(\boldsymbol h^N)=D_{\boldsymbol h^N}X_t^{1,N},\qquad
U_t^{1,N,l}(\boldsymbol h^N)
=D_{\boldsymbol h^{N,(l)}}X_t^{1,N},\,1\le l\le N$$
to emphasize the dependence on the direction. In the following, we record a corollary to  identify the limit of the Malliavin perturbations generated by the interaction terms.

\begin{cor}\label{cor1}
Assume {$\bf(H_1)$}, {$\bf(H_2)$} and {$\bf(H_3)$}. Under the notation of Theorem \ref{gf3}, let $h^i$, $h^{i,N}$ be defined as in \eqref{LLP1} and  define
\begin{equation}\label{hath}
\hat h_t^1
:=
\int_0^t1_{\{s\geq r\}}(\si_s^*a_s^{-1})(X_s^1)
\left( \E\<D^Lb_s(y,\cdot)(\L_{X_s^1})(X_s^1),g_sV_s^1\>\right)|_{y=X_s^1}\,
\d s,
\end{equation}
then for any $X_0,\eta\in L^k(\Omega\to\R^{d},\F_0,\P)$,
\begin{align}\label{ap5}
\lim_{N\to\infty}
\E\left[\sup_{t\in[0,T]}
\bigg|\sum_{l=2}^NU_t^{1,N,l}(\boldsymbol{h}^N)-U_t^1({\hat h^1})\bigg|^k\right]=0.
\end{align}
If, in addition, {$\bf(H_4)$} holds, then for any
$X_0,\eta\in L^q(\Omega\to\R^d,\F_0,\P)$, there exists a constant $c>0$ independent of $N$, such that
\begin{align}\label{ap6}
\E\left[\sup_{t\in[0,T]}
\bigg|\sum_{l=2}^NU_t^{1,N,l}(\boldsymbol{h}^N)-U_t^1({\hat h^1})\bigg|^k\right]
\leq c\epsilon(N)^{\alpha\left( \ff{q-k}{\rho+\alpha q}\wedge 1\right)}.
\end{align}
\end{cor}
\begin{rem}
This corollary explains the reason  why the  Bismut formula for McKean--Vlasov SDEs has an extra component compared with the usual SDEs. Actually, on the particle level, this component corresponds to the perturbations generated by the interaction terms  $$
U_\cdot^{i,N,l}(\boldsymbol{h}^N)=D_{\boldsymbol h^{N,(l)}}X_\cdot^{i,N},\quad 1\leq i\ne l\leq N.
$$  Although each individual perturbation is small, their accumulated effect survives in the mean-field limit and becomes the expectation term in the limiting Bismut formula, specifically the second component in \eqref{zp} below, see also  \cite[(1.16)]{W23b}, \cite[Theorem 2.1]{RW19} and so on.
\end{rem}
\begin{proof}[Proof of Corollary \ref{cor1}]
By the proof of \cite[Theorem 4.1]{RW19}, the definition of $\hat h^1$ in \eqref{hath}, and the choice of $h^1$ in \eqref{LLP1}, we have
\begin{align*}
U_t^1(h^1+\hat h^1)=D_{h^1+\hat h^1}X_t^1=g_tV_t^1,\quad t\in[r,T].
\end{align*}
On the other hand, by Lemma \ref{P3.1},
\begin{align*}
U_t^{1,N}(\boldsymbol{h}^N)=g_tV_t^{1,N},\quad t\in[r,T].
\end{align*}Since the Malliavin derivative is linear in the direction,
\begin{align*}
U_t^{1,N}(\boldsymbol{h}^N)
=
\sum_{l=1}^NU_t^{1,N,l}(\boldsymbol{h}^N),
\qquad
U_t^1(h^1+\hat h^1)
=U_t^1(h^1)+U_t^1(\hat h^1),\quad t\in[0,T].
\end{align*}
Consequently,
\begin{align*}
\sum_{l=2}^NU_t^{1,N,l}(\boldsymbol{h}^N)-U_t^1(\hat h^1)
&=
\left(U_t^{1,N}(\boldsymbol{h}^N)-U_t^1(h^1+\hat h^1)\right)
-\left(U_t^{1,N,1}(\boldsymbol{h}^N)-U_t^1(h^1)\right)\\
&=
g_t\left(V_t^{1,N}-V_t^1\right)
-\left(U_t^{1,N,1}(\boldsymbol{h}^N)-U_t^1(h^1)\right),\quad t\in[r,T].
\end{align*}
For $t<r$, all the Malliavin directions involved above vanish, so the left-hand side is zero. 
Then, since $g\in C^1([r,T])$, we have
\begin{align*}
&\E\left[\sup_{t\in[0,T]}
\bigg|\sum_{l=2}^NU_t^{1,N,l}(\boldsymbol{h}^N)-U_t^1(\hat h^1)\bigg|^k\right]\\
&\leq
c_1(k)\E\left[\sup_{t\in[0,T]}|V_t^{1,N}-V_t^1|^k\right]
+c_1(k)\E\left[\sup_{t\in[0,T]}|U_t^{1,N,1}(\boldsymbol{h}^N)-U_t^1(h^1)|^k\right]
\end{align*}
for some constant $c_1(k)>0$. 
Then \eqref{ap5} follows from \eqref{ap1} and \eqref{ap3}. If {$\bf(H_4)$} also holds, \eqref{ap6} follows from the rate estimates \eqref{ap2} and \eqref{ap4}, which completes the proof.
\end{proof}
\subsection{Proof of Theorem \ref{gf2}}
\begin{proof} For any $X_0,\eta\in L^k(\Omega\to\R^d,\F_0,\P)$ and $1\leq i\leq N$, set
$$
\Delta_t^{i,N}:=V_t^i-V_t^{i,N},\quad t\in[0,T].
$$
By \eqref{v1} and \eqref{v2}, we have
\begin{equation}\label{zi}
\begin{aligned}
\d \Delta_t^{i,N}
=&\bigg\{\nn_{\Delta_t^{i,N}}b_t(X_t^{i,N},\hat\mu_t^N)
+\ff1N\sum_{j=1}^N
\Big\<D^Lb_t(X_t^{i,N},\hat\mu_t^N)(X_t^{j,N}),\Delta_t^{j,N}\Big\>\\
&\qquad\qquad+\xi_t^{i,N}+\zeta_t^{i,N}\bigg\}\d t\\
&+\bigg\{\nn_{\Delta_t^{i,N}}\si_t(X_t^{i,N},\hat\mu_t^N)
+\ff1N\sum_{j=1}^N
\Big\<D^L\si_t(X_t^{i,N},\hat\mu_t^N)(X_t^{j,N}),\Delta_t^{j,N}\Big\>\\
&\qquad\qquad+\chi_t^{i,N}+\psi_t^{i,N}\bigg\}\d W_t^i,
\end{aligned}
\end{equation}
where\begin{equation}\label{coap}
\begin{aligned}
\xi_t^{i,N}:=&\nn_{V_t^i}b_t(X_t^i,\mu_t)
-\nn_{V_t^i}b_t(X_t^{i,N},\hat\mu_t^N)\\
&\quad+\ff1N\sum_{j=1}^N
\Big\<D^Lb_t(X_t^i,\mu_t)(X_t^j)
-D^Lb_t(X_t^{i,N},\hat\mu_t^N)(X_t^{j,N}),V_t^j\Big\>,\\
\zeta_t^{i,N}:=&
\left.\E\Big[\Big\<D^Lb_t(z,\mu_t)(X_t^i),V_t^i\Big\>\Big]\right|_{z=X_t^i}
-\ff1N\sum_{j=1}^N
\Big\<D^Lb_t(X_t^i,\mu_t)(X_t^j),V_t^j\Big\>,\\
\chi_t^{i,N}:=&\nn_{V_t^i}\si_t(X_t^i,\mu_t)
-\nn_{V_t^i}\si_t(X_t^{i,N},\hat\mu_t^N)\\
&\quad+\ff1N\sum_{j=1}^N
\Big\<D^L\si_t(X_t^i,\mu_t)(X_t^j)
-D^L\si_t(X_t^{i,N},\hat\mu_t^N)(X_t^{j,N}),V_t^j\Big\>,\\
\psi_t^{i,N}:=&
\left.\E\Big[\Big\<D^L\si_t(z,\mu_t)(X_t^i),V_t^i\Big\>\Big]\right|_{z=X_t^i}
-\ff1N\sum_{j=1}^N
\Big\<D^L\si_t(X_t^i,\mu_t)(X_t^j),V_t^j\Big\>.
\end{aligned}
\end{equation}
Since $k\geq2$, by It\^o's formula, H\"older's inequality and ${\bf(H_1)}$, there exists a constant $c_1>0$ such that
\begin{equation}\label{ph1}
\d|\Delta_t^{i,N}|^k
\leq c_1\left\{|\Delta_t^{i,N}|^k+\ff1N\sum_{j=1}^N|\Delta_t^{j,N}|^k+\Psi_t^{i,N}\right\}\d t+\d M_t^i,
\end{equation}
where
$$
\Psi_t^{i,N}:=|\xi_t^{i,N}|^k+|\zeta_t^{i,N}|^k+\|\chi_t^{i,N}\|^k+\|\psi_t^{i,N}\|^k,
$$
and $M^i$ is a local martingale satisfying
\begin{align*}
\d\<M^i\>_t
\leq c_1\left\{|\Delta_t^{i,N}|^{2k}
+|\Delta_t^{i,N}|^{2(k-1)}
\left[\left(\ff1N\sum_{j=1}^N|\Delta_t^{j,N}|\right)^{2}
+\|\chi_t^{i,N}+\psi_t^{i,N}\|^2\right]\right\}\d t.
\end{align*}
Then  using	the Burkholder--Davis--Gundy and Young inequalities, and  the identical distribution of $(\Delta_t^{i,N})_{1\leq i\leq N}$, we can find a constant $c_2>0$ such that for any $t\in[0,T]$,
\begin{equation}\label{ph2}
\begin{aligned}
&\E\left[\sup_{s\in[0,t]}|\Delta_s^{i,N}|^k\right]\leq c_1\int_0^t\E\left[|\Delta^{i,N}_s|^k+\ff 1 N\sum_{j=1}^N|\Delta_s^{j,N}|^k+\Psi^{i,N}_s\right]\,\d s\\
&\quad\quad\quad+c_1\E\left(\int_0^t\left\{|\Delta_s^{i,N}|^{2k}+|\Delta_s^{i,N}|^{2(k-1)}\left[\left(\ff 1 N\sum_{j=1}^N|\Delta_s^{j,N}|\right)^{2 }+\|\chi^{i,N}_s+\psi^{i,N}_s\|^2\right]\right\}\,\d s\right)^{\ff 1 2} \\
&\leq c_2\int_0^t\E\left[|\Delta_s^{i,N}|^k+\Psi^{i,N}_s\right]\,\d s+\ff 1 2\E\left[\sup_{s\in[0,t]}|\Delta_s^{i,N}|^k\right].
\end{aligned}
\end{equation}
Thus, due to \eqref{ve1}, \eqref{ve2} and  Grönwall's inequality, there exists a  constant $c_3>0$ such that for any $t\in[0,T]$, $1\le i\le N$,
\begin{align}\label{ph3}
\E\left[\sup_{s\in[0,t]}|\Delta_s^{i,N}|^k\right]\leq c_3\int_0^t\E\Psi^{i,N}_s\,\d s<\infty.
\end{align}

\item[(1)] We first prove \eqref{ap1}. By \eqref{ph3}, it is enough to prove
$
\lim_{N\to\infty}\int_0^T\E\Psi_t^{1,N}\d t=0.
$
Actually,	we only need to prove the  	\begin{align}\label{psi}\lim_{N\to\infty}\E\Psi_t^{1,N}=0,\ t\in[0,T]\end{align} due to ${\bf (H_1)}$, the uniform moment estimates in \eqref{ve1} and \eqref{ve2} and  dominated convergence theorem.

We begin with $\xi_t^{1,N}$ and $\chi_t^{1,N}$. Due to  {$\bf(H_1)$} and {$\bf(H_3)$}, for any $n\geq1$, it follows\begin{equation}\label{pn1}
\begin{aligned}
|\xi_t^{1,N}|+\|\chi_t^{1,N}\|\leq& |V_t^1|\left\{n\left( |X_t^{1}-X_t^{1,N}|+\W_k(\mu_t,\hat\mu_t^N)\right) ^{\ff 1{k}}+s_n\right\}+\ff 1 N\sum_{j=1}^N\bigg(|V_t^j|\\
&\cdot\left\{n\left( |X_t^{1}-X_t^{1,N}|+|X_t^{j}-X_t^{j,N}|+\W_k(\mu_t,\hat\mu_t^N)\right) ^{\ff 1{k}}+s_n\right\}\bigg),
\end{aligned}
\end{equation}
where for 
\begin{align*}
\varphi(r):=
&\sup_{\substack{
	t\in[0,T],\,x,x',y,y'\in\R^d,\,\mu,\nu\in\scr P_k\\
	|x-x'|\vee \W_k(\mu,\nu)\vee |y-y'|\leq r}}
	\bigg\{
	\|\nn b_t(x,\mu)-\nn b_t(x',\nu)\|
	+\|\nn\si_t(x,\mu)-\nn\si_t(x',\nu)\|\\
	&\quad\quad+\|D^Lb_t(x,\mu)(y)-D^Lb_t(x',\nu)(y')\|+\|D^L\si_t(x,\mu)(y)-D^L\si_t(x',\nu)(y')\|
	\bigg\},
\end{align*}
we have \begin{align}\label{sn}
s_n:=\sup_{r\geq0}\left\{\varphi(r)-nr^{\ff 1{k}}\right\}	\downarrow0 \ \text{as}\ n\uparrow\8.
\end{align}
For any $1\leq j\leq N$ and $t\in[0,T]$,  let $$H_t^{j,N,n}=n^k\left( |X_t^{1}-X_t^{1,N}|+|X_t^{j}-X_t^{j,N}|+\W_k(\mu_t,\hat\mu_t^N)\right) +s_n^k,\ \ n\geq1,$$ 
which by \eqref{ap01} and \eqref{sn}, satisfies 
\begin{align}\label{hn}
\lim_{n\to\8}\lim_{N\to\8}\max_{1\leq j\leq N}\E H_t^{j,N,n}=0,\ \ t\in[0,T].
\end{align}

On the other hand, due to ${\bf(H_1)}$ and H\"older's inequality, it follows 
\begin{equation*}
\begin{aligned}
&\E\left[|\xi_t^{1,N}|^k+\|\chi_t^{1,N}\|^k\right]\leq \ff {c_5} N\sum_{j=1}^N\E\left[\left( |V_t^1|^k+|V_t^j|^k\right)\big(H_t^{j,N,n}\wedge c_4\big) \right]
\end{aligned}
\end{equation*}
for some   constants $c_4,c_5>0$. Since by \eqref{ve1} and \eqref{hn},
\begin{align*}
&	\sup_{1\leq j\leq N}\E\left[\left( |V_t^1|^k+|V_t^j|^k\right)\big(H_t^{j,N,n}\wedge c_4\big) \right]
\\\leq &2c_4\E\left[ |V_t^1|^k\1_{\{|V_t^1|^k>M\}} \right]+2M\sup_{1\leq j\leq N} \E H_t^{j,N,n},\ \ M\geq1
\end{align*}
converges to $0$ in the successive limits $N\to\infty$, $n\to\8$, and then $M\to\infty$, 
we have 
\begin{align}\label{psi1}
\lim_{N\to\8}\E\left[|\xi_t^{1,N}|^k+\|\chi_t^{1,N}\|^k\right]=0.
\end{align}
Subsequently, since $(X_t^i,V_t^i)_{1\leq i\leq N}$ are i.i.d., according to \eqref{l3}, \eqref{ve1},  we have
\begin{equation}\label{psi2}
\begin{split}
\E|\zeta_t^{1,N}|^k\leq& c_6\E\left|\E\left[\<D^Lb_t(z,\mu_t)(X_t^1),V_t^1\>\right]|_{z=X_t^1}-\ff 1 N\sum_{j=2}^N\<D^Lb_t(X_t^1,\mu_t)(X_t^j),V_t^j\>\right|^k\\&+c_6\E\left|\ff 1 N\<D^Lb_t(X_t^1,\mu_t)(X_t^1),V_t^1\>\right|^k\\
= &c_6\E\left\{ \E\bigg[ \bigg|\E\big[\<D^Lb_t(z,\mu_t)(X_t^1),V_t^1\>\big]-\ff 1 N\sum_{j=2}^N\<D^Lb_t(z,\mu_t)(X_t^j),V_t^j\>\bigg|^k\bigg]\bigg|_{z=X_t^1} \right\} \\&+c_6\E\left|\ff 1 N\<D^Lb_t(X_t^1,\mu_t)(X_t^1),V_t^1\>\right|^k\\
\le &c_7\E\left\{ \E\bigg[ \bigg|\E\big[\<D^Lb_t(z,\mu_t)(X_t^1),V_t^1\>\big]-\ff 1 N\sum_{j=1}^N\<D^Lb_t(z,\mu_t)(X_t^j),V_t^j\>\bigg|^k\bigg]\bigg|_{z=X_t^1} \right\} \\&+c_7\E\left\{\E\left[ \left|\ff 1 N\<D^Lb_t(z,\mu_t)(X_t^1),V_t^1\>\right|^k\right] \bigg|_{z=X_t^1}\right\}\\&+c_7\E\left|\ff 1 N\<D^Lb_t(X_t^1,\mu_t)(X_t^1),V_t^1\>\right|^k
\leq  {c_8}{N^{-\ff k2}}
\end{split}
\end{equation}
for some constants $c_6,c_7,c_8>0$.
Repeating the same argument above for $b$ replaced by $\si$, we can also find a constant $c_9>0$ such that
\begin{align}\label{psi3}
\E|\psi_t^{1,N}|^k\leq  {c_9}{N^{-\ff k2}}.
\end{align}
Therefore, the desired estimate  \eqref{psi} is derived by combining \eqref{psi1}, \eqref{psi2} with \eqref{psi3}.
\item[(2)]  Next, for any $X_0,\eta\in L^{q}(\Omega\to\R^{d},\F_0,\P)$ for some $q>k$, we focus on \eqref{ap2}.  Combining  \eqref{ph3} with \eqref{psi2} and \eqref{psi3}, we  only need to  prove \begin{align}\label{psi1'}
\E\left[|\xi_t^{1,N}|^k+\|\chi_t^{1,N}\|^k\right]\leq C_{0}\epsilon(N)^{\alpha\left( \ff{q-k}{\rho+\alpha q}\wedge 1\right)}, \  \ t\in[0,T]
\end{align}for some  constant $C_{0}>0$. 
Given {$\bf(H_1)$}, {$\bf(H_4)$}, and $\eta\in L^{q}(\Omega\to\R^{d},\F_0,\P)$ for $q>k$,  due to   \eqref{ve1}, the i.i.d. property of $(V_t^i)_{1\leq i\leq N}$,  and by H\"older's inequality, we  can find constants $C_1,C_2,C_3>0$ such that
\begin{equation}
\begin{aligned}\label{psi2'}
\E\left[|\xi_t^{1,N}|^k+\|\chi_t^{1,N}\|^k\right]\leq& \ff {C_1} N\sum_{j=1}^N\E\left[  \left(|V_t^1|^k+|V_t^j|^k\right)(J_t^{j,N}) ^{k}\right]  \\
\leq &C_2\left(\E|V_t^{1}|^{q}\right)^{\ff k {q}}\cdot\ff {1} N\sum_{j=1}^N\left(\E\left[(J_t^{j,N})^{\ff {kq}{q-k}}\right]\right)^{\ff {q-k} {q}}\\
\leq&\ff {C_3\|\eta\|_{{q}}^k} N\sum_{j=1}^N\left(\E\left[(J_t^{j,N})^{\ff {kq}{q-k}}\right]\right)^{\ff  {q-k}{q}},\ \ t\in[0,T],
\end{aligned}
\end{equation}
where \begin{align*}
J_t^{j,N}:=& \|\nn b_t(X_t^1,\mu_t)-\nn b_t(X_t^{1,N},\hat\mu_t^N)\|+\|D^Lb_t(X_t^1,\mu_t)(X_t^j)-D^Lb_t(X_t^{1,N},\hat\mu_t^N)(X_t^{j,N})\|\\
&+\|\nn \si_t(X_t^1,\mu_t)-\nn \si_t(X_t^{1,N},\hat\mu_t^N)\|_{HS}+\|D^L\si_t(X_t^1,\mu_t)(X_t^j)-D^L\si_t(X_t^{1,N},\hat\mu_t^N)(X_t^{j,N})\|.
\end{align*}Choosing  $$\beta=\ff{q-k}{\rho+\alpha q}\wedge 1$$ such that $$\ff {\rho\beta}{q-k}+\ff {\alpha\beta q}{q-k}\leq  1,$$ and using ${(\bf H_1)}$, ${(\bf H_4)}$, H\"older's and Young's inequalities, \eqref{MM1}, \eqref{MM2} and \eqref{ap02}, we derive
\begin{equation*}
\begin{aligned}
&\E\left[(J_t^{j,N})^{\ff {kq}{q-k}}\right]\leq K \E\bigg[C_4\wedge\bigg\{ \Big(1+|X_t^1|^q+|X_t^{1,N}|^q+|X_t^j|^q+|X_t^{j,N}|^q+\ff 1 N\sum_{j=1}^N|X_t^{j,N}|^q\Big)^{\ff {\rho}{q-k}}\\
&\quad\quad\quad\quad\quad\quad\quad\times\Big(|X_t^1-X_t^{1,N}|^k+|X_t^{j}-X_t^{j,N}|^k
+\W_k(\mu_t,\hat\mu_t^N)^k\Big)^{{\ff {\alpha q}{q-k}}}\bigg\}\bigg]\\
&\leq C_5\E\bigg[ \Big(1+|X_t^1|^q+|X_t^{1,N}|^q+|X_t^j|^q+|X_t^{j,N}|^q+\ff 1 N\sum_{j=1}^N|X_t^{j,N}|^q\Big)^{\ff {\beta \rho}{q-k}}\\
&\quad\times\Big(|X_t^1-X_t^{1,N}|^k+|X_t^{j}-X_t^{j,N}|^k
+\W_k(\mu_t,\hat\mu_t^N)^k\Big){^{\ff {\alpha\beta q}{q-k}}}\bigg]\\
&\leq C_6\left( \E\left[\Big(1+|X_t^1|^q+|X_t^{1,N}|^q+|X_t^j|^q+|X_t^{j,N}|^q+\ff 1 N\sum_{j=1}^N|X_t^{j,N}|^q\Big)\right]\right)^{1-\ff {\alpha\beta q} {q-k}}\\
&\quad\times\left( \E\left[\Big(|X_t^1-X_t^{1,N}|^k+|X_t^{j}-X_t^{j,N}|^k
+\W_k(\mu_t,\hat\mu_t^N)^k\Big)\right]\right) ^{\ff {\alpha\beta q} {q-k}}\\
&\leq  C_7\epsilon(N)^{\ff {\alpha\beta q} {q-k}}, \ \ t\in[0,T],\,1\le j\leq N
\end{aligned}
\end{equation*}
for some constants $C_4,C_5,C_6,C_7>0$. Combining this  with \eqref{psi2'} and the definition of $\beta$, we obtain the \eqref{psi1'}.
\end{proof}
\subsection{Proof of Theorem~\ref{gf3}}
\begin{proof}For $1\leq i\leq N$ and $t\in[0,T]$, set
$$
\Xi_t^{i,N}:=U_t^i-U_t^{i,N,i}.
$$
Noting that $(h_t^{i,N})'=(h_t^i)'=0$ for $t<r$, we have
$$
U_t^i=U_t^{i,N,l}={\bf0},\quad t<r,\ 1\leq i,l\leq N.
$$
Thus it suffices to consider $t\in[r,T]$. Without loss of generality, we take $r=0$.
By \eqref{w1} and \eqref{DR31}, for $1\leq i\leq N$, $t\in[0,T]$,
\begin{equation}\label{xi}
\begin{aligned}
\d \Xi_t^{i,N}
=&\left\{\nn_{\Xi_t^{i,N}}b_t(X_t^{i,N},\hat\mu_t^N)
+\tilde\xi_t^{i,N}-\tilde\zeta_t^{i,N}\right\}\d t+\left\{\nn_{\Xi_t^{i,N}}\si_t(X_t^{i,N})
+\tilde\chi_t^{i,N}\right\}\d W_t^i,
\end{aligned}
\end{equation}
where
\begin{align*}
\tilde\xi_t^{i,N}
:=&\nn_{U_t^i}b_t(X_t^i,\mu_t)
-\nn_{U_t^i}b_t(X_t^{i,N},\hat\mu_t^N)
+g_t'(V_t^i-V_t^{i,N}),\\
\tilde\zeta_t^{i,N}
:=&\ff1N\sum_{j=1}^N
\Big\<D^Lb_t(X_t^{i,N},\hat\mu_t^N)(X_t^{j,N}),U_t^{j,N,i}\Big\>,\\
\tilde\chi_t^{i,N}
:=&\nn_{U_t^i}\si_t(X_t^i)-\nn_{U_t^i}\si_t(X_t^{i,N}).
\end{align*}By It\^o's formula, the Burkholder--Davis--Gundy and Young inequalities, and Gr\"onwall's inequality, there exists a constant $c_0>0$ such that
\begin{align}\label{xi-s}
\E\left[\sup_{t\in[0,T]}|\Xi_t^{1,N}|^k\right]
\leq c_0\int_0^T
\E\left[
|\tilde\xi_t^{1,N}|^k+|\tilde\zeta_t^{1,N}|^k+\|\tilde\chi_t^{1,N}\|^k
\right]\d t.
\end{align}
By the moment estimates for $U^1$ and $V^1$, the same argument used in the proof of Theorem~\ref{gf2}, together with \eqref{ap1}, gives
\begin{align}\label{wap1}
\lim_{N\to\infty}
\E\left[|\tilde\xi_t^{1,N}|^k+\|\tilde\chi_t^{1,N}\|^k\right]=0,\quad t\in[0,T].
\end{align}
If, in addition, ${\bf(H_4)}$ holds and $X_0,\eta\in L^q(\Omega\to\R^d,\F_0,\P)$, then the same argument, together with \eqref{ap2}, yields
\begin{align}\label{wap2}
\E\left[|\tilde\xi_t^{1,N}|^k+\|\tilde\chi_t^{1,N}\|^k\right]
\leq c_1\epsilon(N)^{\alpha\left( \ff{q-k}{\rho+\alpha q}\wedge 1\right)}
\end{align}
for some constant $c_1>0$.
On the other hand,  taking advantage of ${\bf(H_1)}$, \eqref{we3} and Jensen's inequality,
\begin{align}\label{wap3}
\E\left[|\tilde\zeta_t^{1,N}|^k\right]
&\leq \ff{c_2}{N}\sum_{j=1}^N\E|U_t^{j,N,1}|^k\leq c_3N^{-1}\|\eta\|_k^k,\quad t\in[0,T]
\end{align}
holds	for some constants $c_2,c_3>0$.

Therefore,	combining \eqref{xi-s}, \eqref{wap1} and \eqref{wap3}, and using dominated convergence, we obtain \eqref{ap3}. Moreover, \eqref{ap4} follows from \eqref{xi-s}, \eqref{wap2} and \eqref{wap3}, which completes the proof.
\end{proof}

\section{Application: Propagation of chaos for intrinsic derivative}\label{app}
Denote
$$
P_tf(\mu)=\E\left[f(X_t^{1})\right],\quad
P_t^{N}f(\mu)=\E\left[f(X_t^{1,N})\right],
\quad t\in[0,T],\,\mu\in\scr P_k,\,f\in\B_b(\R^d),
$$
where $X_t^{1}$ and $X_t^{1,N}$ are the unique solutions to \eqref{E1} and \eqref{E2} with
$\L_{X_0^1}=\L_{X_0^{1,N}}=\mu$, respectively.
For $\mu\in\scr P_k$ and $\phi\in T_{\mu,k}$, let
$$
\eta^i=\phi(X_0^i),\quad 1\leq i\leq N.
$$
Then $(\eta^i)_{1\leq i\leq N}$ are i.i.d. copies of $\eta=\phi(X_0)$. The intrinsic derivatives of $P_tf$ and $P_t^Nf$ at $\mu$ are given by
\begin{align*}
D_\phi^I(P_tf)(\mu)
=&\lim_{\vv\downarrow0}
\ff{P_tf\left(\mu\circ({\rm id}+\vv\phi)^{-1}\right)-P_tf(\mu)}{\vv}\\
=&\lim_{\vv\downarrow0}\ff1{\vv}
\E\left[f(X_t^{\phi,1,\vv})-f(X_t^1)\right],
\\
D_\phi^I(P_t^Nf)(\mu)
=&\lim_{\vv\downarrow0}
\ff{P_t^Nf\left(\mu\circ({\rm id}+\vv\phi)^{-1}\right)-P_t^Nf(\mu)}{\vv}\\
=&\lim_{\vv\downarrow0}\ff1{\vv}
\E\left[f(X_t^{\phi,1,N,\vv})-f(X_t^{1,N})\right],
\end{align*}
where $X_t^{\phi,1,\vv}$ and $X_t^{\phi,1,N,\vv}$ solve \eqref{E10} and \eqref{E20} with $\eta^i=\phi(X_0^i)$, $1\leq i\leq N$, respectively.

As an application of Theorem \ref{gf2} and \ref{gf3}, in this section, 
for any $f\in C_b^1(\R^d)$, by taking $N\to\8$, we prove that $D_\phi^I(P_t^Nf)(\mu)$ converges to  $D_\phi^I(P_tf)(\mu)$, with explicit convergence rates when $f\in C_b^2(\R^d)$. Since these intrinsic
derivatives with respect to initial distribution have Bismut-type representations, the result can also be
viewed as a finite-particle approximation of the Bismut formula.

%
We first provide a Bismut-type formula for \eqref{E2}. The proof is standard based on Lemma \ref{P3.1}, for details, see the proof of \cite[Theorem 2.1]{RW19}. We omit it here to avoid repetition.

\begin{lem}\label{bf}
Assume {$\bf(H_1)$} and {$\bf(H_2)$}. Then for any $f\in\B_b(\R^d)$ and $t\in(0,T]$, $P_t^Nf$ is intrinsically differentiable on $\scr P_k$. Moreover, for any $\mu\in\scr P_k$, $\phi\in T_{\mu,k}$, $t\in(0,T]$, and $g\in C^1([0,t])$ with $g_0=0$ and $g_t=1$,
\begin{equation}\label{BSMN}
\beg{split}
D_\phi^I(P_t^Nf)(\mu)
=&\sum_{i=1}^N
\E\bigg[
f(X_t^{1,N})
\int_0^t
\big\<(\si_s^*a_s^{-1})(X_s^{i,N})g_s'V_s^{\phi,i,N},\d W_s^i\big\>
\bigg],
\end{split}
\end{equation}
where $V_s^{\phi,i,N}$ is the unique solution to \eqref{v2} with $\eta^i=\phi(X_0^i)$.
\end{lem}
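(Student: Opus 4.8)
The plan is to establish the Bismut-type formula \eqref{BSMN} for the particle system \eqref{E2} by differentiating $P_t^Nf(\mu) = \E[f(X_t^{1,N})]$ along the perturbation direction $\phi$ and then performing an integration by parts in the Malliavin sense. First I would fix $\mu\in\scr P_k$, $\phi\in T_{\mu,k}$, and a smooth $g\in C^1([0,t])$ with $g_0=0$, $g_t=1$, and take $\eta^i=\phi(X_0^i)$, so that $(X_0^i,\eta^i,W^i)_{1\le i\le N}$ are i.i.d.\ copies of $(X_0,\phi(X_0),W)$. Since \eqref{E20} is just a classical SDE on $\R^{dN}$ with the empirical measure frozen into the coefficients, standard dominated-convergence and differentiation-under-the-expectation arguments (using the moment bounds \eqref{ve2} from Lemma \ref{gf1} together with $f\in C_b^1$) give
\begin{align*}
D_\phi^I(P_t^Nf)(\mu)=\lim_{\vv\downarrow0}\ff1\vv\E\big[f(X_t^{\phi,1,N,\vv})-f(X_t^{1,N})\big]
=\E\big[\<\nabla f(X_t^{1,N}),\nabla_{\boldsymbol\eta}X_t^{1,N}\>\big]
=\E\big[\<\nabla f(X_t^{1,N}),v_t^{\boldsymbol\eta,1,N}\>\big],
\end{align*}
where $v_t^{\boldsymbol\eta,1,N}$ solves \eqref{v2}; this also shows $P_t^Nf$ is intrinsically differentiable on $\scr P_k$ for bounded Lipschitz $f$, and then a standard approximation/monotone-class argument extends intrinsic differentiability to all $f\in\B_b(\R^d)$.

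Next I would invoke Lemma \ref{P3.1}: with $h^{i,N}_t=\int_0^t(\si_s^*a_s^{-1})(X_s^{i,N})g_s'v_s^{\boldsymbol\eta,i,N}\,\d s$ (the choice $r=0$) and $\boldsymbol h^N=(h^{1,N},\dots,h^{N,N})$, we have $D_{\boldsymbol h^N}X_t^{i,N}=g_t\nabla_{\boldsymbol\eta}X_t^{i,N}$, hence $D_{\boldsymbol h^N}X_t^{1,N}=\nabla_{\boldsymbol\eta}X_t^{1,N}=v_t^{\boldsymbol\eta,1,N}$ because $g_t=1$. Therefore, by the chain rule for the Malliavin derivative applied to the functional $F_t^{1,N}(\boldsymbol W)=f(X_t^{1,N})$,
\begin{align*}
D_{\boldsymbol h^N}\big(f(X_t^{1,N})\big)=\<\nabla f(X_t^{1,N}),D_{\boldsymbol h^N}X_t^{1,N}\>=\<\nabla f(X_t^{1,N}),v_t^{\boldsymbol\eta,1,N}\>,
\end{align*}
which identifies the quantity computed in the previous step as $\E[D_{\boldsymbol h^N}(f(X_t^{1,N}))]$ (one may condition on $\F_0$ and use \eqref{INT} with trivial $\F_0$ here, or argue directly).

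Finally I would apply the Malliavin integration-by-parts formula \eqref{INT}–\eqref{DD*}: since each $h^{i,N}$ is adapted and, by $\bf(H_2)$ ($\|a^{-1}\|_\infty<\infty$) together with the moment estimates \eqref{MM2} and \eqref{ve2}, lies in $L^k(\OO\to\H,\P)$ with $(h^{i,N}_s)'=(\si_s^*a_s^{-1})(X_s^{i,N})g_s'v_s^{\boldsymbol\eta,i,N}$, we get $\boldsymbol h^N\in\D(D^*)$ and $D^*(\boldsymbol h^N)=\sum_{i=1}^N\int_0^t\<(\si_s^*a_s^{-1})(X_s^{i,N})g_s'v_s^{\boldsymbol\eta,i,N},\d W_s^i\>$. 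Hence
\begin{align*}
D_\phi^I(P_t^Nf)(\mu)=\E\big[D_{\boldsymbol h^N}(f(X_t^{1,N}))\big]=\E\bigg[f(X_t^{1,N})\sum_{i=1}^N\int_0^t\<(\si_s^*a_s^{-1})(X_s^{i,N})g_s'v_s^{\boldsymbol\eta,i,N},\d W_s^i\>\bigg],
\end{align*}
which is \eqref{BSMN}; a final density argument in $f$ (approximating $f\in\B_b$ by $C_b^1$ functions and using that the right-hand side is continuous in $f$ under bounded pointwise convergence, thanks to the integrability of the stochastic integral factor) removes the smoothness assumption on $f$. The main obstacle is the justification of differentiation under the expectation and of the chain rule for $D_{\boldsymbol h^N}$ — i.e.\ showing the difference quotients $\vv^{-1}(X_t^{\phi,1,N,\vv}-X_t^{1,N})$ converge in $L^k$ to $v_t^{\boldsymbol\eta,1,N}$ with a uniformly integrable bound — but this is exactly what Lemmas \ref{gf1} and \ref{P3.1} provide, so the argument reduces to citing those results and the standard Bismut computation in \cite[(2.3)]{RW19}.
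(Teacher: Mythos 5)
Your proposal is correct and follows essentially the same route the paper intends: the paper omits the proof precisely because it is the standard Bismut computation of \cite[(2.3)]{RW19} built on Lemma \ref{P3.1} — differentiate $\E[f(X_t^{1,N})]$ along $\boldsymbol\eta=(\phi(X_0^i))_i$, identify $\nabla_{\boldsymbol\eta}X_t^{1,N}=D_{\boldsymbol h^N}X_t^{1,N}$ via \eqref{LD1}, and apply the integration by parts \eqref{INT}--\eqref{DD*}, extending from $C_b^1$ to $\B_b$ by approximation. No gaps worth noting.
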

Actually,  \eqref{BSMN} provides a   particle-system approximation of the Bismut formula for the McKean--Vlasov SDE. This approximation will be justified in the following theorem.
\begin{thm}\label{bfa}
Assume {$\bf(H_1)$}, {$\bf(H_2)$} and {$\bf(H_3)$}.
Then for any   $f\in C_b^1(\R^d)$, $t\in(0,T]$, $\mu\in\scr P_k$,  $\phi\in T_{\mu,k}$, and
$g\in C^1([0,t])$ with $g_0=0$, $g_t=1$,
\begin{align*}
\lim_{N\to\8} D^I_{\phi} (P_t^N f)(\mu)=D^I_{\phi}(P_t f)(\mu)=\E\bigg[f(X_t^1)   \int_0^t \big\< \zeta_s^\phi,\ \d W_s^1\big\>\bigg],
\end{align*}
where   $X_t^{1}$ solves $\eqref{E1}$ for $i=1$, $\L_{X_0^{1}}=\mu$, and $V^{\phi,1}$ solves \eqref{v1} with $\eta^1=\phi(X_0^1)$,
\begin{align}\label{zp} 
\zeta_t^{\phi}:= (\si_t^*a_t^{-1})(X_t^{1})\left\{ g_t' V_t^{\phi,1}+ \big(\E \< D^L b_t(y,\cdot)(\L_{X_t^1})(X_t^1), g_t V_t^{\phi,1} \>\big)\big|_{y=X_t^1}	\right\}.
\end{align} 
If, in addition, {$\bf(H_4)$} holds, then for any $f\in C_b^2(\R^d)$, $t\in(0,T]$, $\mu\in\scr P_q$ and $\phi\in T_{\mu,q}$, there exists a constant $c>0$ independent of $N$, such that
\begin{align}\label{bma2}
\left|D_\phi^I(P_t^Nf)(\mu)-D_\phi^I(P_tf)(\mu)\right|^k
\leq
c\epsilon(N)^{\alpha \left(\ff{q-k}{\rho+\alpha q}\wedge1\right)}.
\end{align}
\end{thm}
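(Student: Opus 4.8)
The plan is to prove Theorem~\ref{bfa} by combining the particle-level Bismut formula \eqref{BSMN} with the propagation-of-chaos estimates for the derivative flows established in Theorems~\ref{gf2} and \ref{gf3} and in Corollary~\ref{cor1}. First I would rewrite the summand on the right-hand side of \eqref{BSMN} by splitting the sum into the $i=1$ term and the $i\ge 2$ terms. Using the representation $h^{i,N}$ from \eqref{LLP1} and the decomposition \eqref{dhl}, the stochastic integral against $W^i$ for $i\ge 2$ is exactly the contribution of $D_{\boldsymbol h^{N,(i)}}X_t^{1,N}$ to the Malliavin derivative of $X_t^{1,N}$; hence by the integration-by-parts formula \eqref{INT}--\eqref{DD*} one has, for each fixed $N$,
\begin{align*}
\E\bigg[f(X_t^{1,N})\int_0^t\langle(\si_s^*a_s^{-1})(X_s^{i,N})g_s' v_s^{\phi,i,N},\d W_s^i\rangle\bigg]
=\E\big[\langle \nabla f(X_t^{1,N}), D_{\boldsymbol h^{N,(i)}}X_t^{1,N}\rangle\big],
\end{align*}
and summing over $i$ gives $D_\phi^I(P_t^Nf)(\mu)=\E[\langle\nabla f(X_t^{1,N}), D_{\boldsymbol h^N}X_t^{1,N}\rangle]=\E[\langle\nabla f(X_t^{1,N}), g_t v_t^{\phi,1,N}\rangle]$ by \eqref{LD}. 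Likewise, the limiting Bismut formula from \cite[Theorem~4.1]{RW19} gives $D_\phi^I(P_tf)(\mu)=\E[\langle\nabla f(X_t^1),g_t v_t^{\phi,1}\rangle]$, and the representation $\zeta_s^\phi$ in \eqref{zp} is just \eqref{BSMN}'s integrand rewritten via Corollary~\ref{cor1}, the point of that corollary being precisely to identify the extra $D^L b$ term on the particle side.

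Next I would estimate the difference. Write
\begin{align*}
D_\phi^I(P_t^Nf)(\mu)-D_\phi^I(P_tf)(\mu)
=\E\big[\langle\nabla f(X_t^{1,N})-\nabla f(X_t^1),\,g_t v_t^{\phi,1,N}\rangle\big]
+\E\big[\langle\nabla f(X_t^1),\,g_t(v_t^{\phi,1,N}-v_t^{\phi,1})\rangle\big].
\end{align*}
For the second term, $\|\nabla f\|_\infty$ is bounded when $f\in C_b^1$, so this is controlled by $\|g\|_\infty\,\E|v_t^{\phi,1,N}-v_t^{\phi,1}|$, which tends to $0$ by Theorem~\ref{gf2}(1) (with $\eta^i=\phi(X_0^i)$, noting $\phi\in T_{\mu,k}$ means $\phi(X_0)\in L^k$), and is bounded by $c\,\epsilon(N)^{\alpha((q-k)/(m+\alpha q)\wedge 1)}$ under $\bf(H_4)$ by Theorem~\ref{gf2}(2), using Jensen to pass from the $L^k$ rate to the $L^1$ rate. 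For the first term, when only $\bf(H_1)$--$\bf(H_3)$ hold and $f\in C_b^1$, I would use that $\nabla f$ is bounded and continuous together with $X_t^{1,N}\to X_t^1$ in $L^k$ (from \eqref{ap01}) and uniform integrability of $|v_t^{\phi,1,N}|$ (from \eqref{ve2}) to conclude convergence to $0$ by dominated convergence; this yields the first assertion. When $f\in C_b^2$, $\nabla f$ is Lipschitz, so $|\nabla f(X_t^{1,N})-\nabla f(X_t^1)|\le\|\nabla^2 f\|_\infty|X_t^{1,N}-X_t^1|$, and Hölder's inequality with exponents $q/k$ and $q/(q-k)$ (using $\phi\in T_{\mu,q}$ so $\E|v_t^{\phi,1,N}|^{q/(q-k)\cdot\,\cdot}$ is controlled via \eqref{ve2}) together with the rate \eqref{ap02} for $\E|X_t^{1,N}-X_t^1|^k$ gives a bound by $c\,\epsilon(N)^{(q-k)/q}$, which is dominated by the stated rate since $\alpha((q-k)/(m+\alpha q)\wedge1)\le (q-k)/q$. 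Combining the two bounds yields \eqref{bma2}.

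One technical point I would need to address carefully is the justification of the interchange of limit and expectation (or more precisely, that $D_\phi^I(P_t^Nf)(\mu)$ really equals $\E[\langle\nabla f(X_t^{1,N}),g_t v_t^{\phi,1,N}\rangle]$): for $f\in C_b^1$ this follows from differentiating under the expectation in $\E[f(X_t^{\phi,1,N,\vv})]$, which is legitimate because $X_t^{\phi,1,N,\vv}$ is differentiable in $\vv$ in $L^k$ with derivative $v_t^{\phi,1,N}$ by Lemma~\ref{gf1} and the difference quotients are uniformly bounded in $L^k$ by \eqref{ve2} and \eqref{l1'}; alternatively one invokes \eqref{BSMN} directly for $f\in\B_b$ and notes both sides agree. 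I expect the main obstacle to be bookkeeping rather than any deep new idea: correctly matching the interaction term $\sum_{l\ge2}D_{\boldsymbol h^{N,(l)}}X_t^{1,N}$ with $D_{\hat h^1}X_t^1$ via Corollary~\ref{cor1} so that the two Bismut representations line up term by term, and verifying that all the integrability exponents needed for the Hölder estimates are supplied by the hypotheses $X_0,\eta\in L^q$ and $\phi\in T_{\mu,q}$. The rate-matching step — checking $\alpha((q-k)/(m+\alpha q)\wedge1)\le(q-k)/q$ so that the $\nabla f$-difference term does not spoil the overall rate — is a short inequality but worth stating explicitly.
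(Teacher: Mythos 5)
Your proposal follows essentially the same route as the paper's proof: reduce both sides to the representations $D^I_{\boldsymbol\eta}(P_t^Nf)(\mu)=\mathbb{E}\big[\langle\nabla f(X_t^{1,N}),\nabla_{\boldsymbol\eta}X_t^{1,N}\rangle\big]$ and $D^I_{\eta^1}(P_tf)(\mu)=\mathbb{E}\big[\langle\nabla f(X_t^{1}),\nabla_{\eta^1}X_t^{1}\rangle\big]$ (the paper obtains these by differentiating under the expectation and quotes \cite{RW19,W23b} for the $\zeta^\phi$-form of the limit; your detour through \eqref{BSMN}, integration by parts and Corollary \ref{cor1} is a legitimate alternative but not needed), then split the difference into a $\nabla f$-difference term and a derivative-flow-difference term and conclude from Theorem \ref{gf2}, the classical rates \eqref{ap01}--\eqref{ap02}, and the moment bounds \eqref{ve1}--\eqref{ve2}. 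The qualitative assertion for $f\in C_b^1$ is handled exactly as in the paper.

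The genuine issue is your rate bookkeeping in the second part. Theorem \ref{gf2}(2) and \eqref{ap02} are $L^k$-estimates, so Jensen only gives $\mathbb{E}\big|v_t^{\phi,1,N}-v_t^{\phi,1}\big|\le c\,\epsilon(N)^{E/k}$ with $E=\alpha\big(\frac{q-k}{m+\alpha q}\wedge 1\big)$, not $\epsilon(N)^{E}$ as you assert; similarly, your H\"older step with exponents $q/k$ and $q/(q-k)$ for the $\nabla f$-difference term yields only $\epsilon(N)^{1/k}$ (interpolating a small $L^k$ norm against merely bounded higher moments cannot beat the $1/k$-th root), not $\epsilon(N)^{(q-k)/q}$, so the final comparison $E\le(q-k)/q$ does not rescue the claimed exponent. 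As written, your argument establishes \eqref{bma2} with exponent $E/k$ rather than $E$. To be fair, the paper's own proof concludes \eqref{bma2} from the identical two-term decomposition by citing \eqref{ap2}, \eqref{ap02}, \eqref{ve1} without tracking these $1/k$-th roots either, so your proposal is at the same level of precision as the paper; but if you state intermediate bounds explicitly, they should be the correct ones ($\epsilon(N)^{E/k}$ and $\epsilon(N)^{1/k}$), and the exponent loss relative to the displayed rate in \eqref{bma2} should be acknowledged or repaired rather than papered over by the inequality $E\le(q-k)/q$.
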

\begin{proof}
By combining the Bismut formula in \cite{W23b} with the argument in   \cite{RW19}, under {$\bf(H_1)$}, {$\bf(H_2)$} and {$\bf(H_3)$}, we have
\begin{align*}
D_\phi^I(P_tf)(\mu)
=
\E\bigg[
f(X_t^1)\int_0^t\<\zeta_s^\phi,\d W_s^1\>
\bigg],
\quad f\in\B_b(\R^d),
\end{align*}
where $\zeta^\phi$ is defined by \eqref{zp}.

On the other hand,  the dominated convergence theorem implies that for any $\mu\in\scr P_k$, $f\in C^1_b(\R^d)$ and $t\in(0,T]$,
\begin{align*}
D_\phi^I(P_t^Nf)(\mu)
=
\E\left[\<\nn f(X_t^{1,N}),V_t^{\phi,1,N}\>\right],
\quad
D_\phi^I(P_tf)(\mu)
=
\E\left[\<\nn f(X_t^1),V_t^{\phi,1}\>\right].
\end{align*}
Hence, using H\"older's inequality,
\begin{equation}\label{bma}
\begin{aligned}
\left|
D_\phi^I(P_t^Nf)(\mu)-D_\phi^I(P_tf)(\mu)
\right|
\leq&
\|\nn f\|_\8
\E\left|V_t^{\phi,1,N}-V_t^{\phi,1}\right|\\
&+
\left(
\E|V_t^{\phi,1}|^2
\cdot
\E\left|\nn f(X_t^{1,N})-\nn f(X_t^1)\right|^2
\right)^{\ff12}.
\end{aligned}
\end{equation}
Since $\nn f$ is bounded and continuous, \eqref{ap01} yields
$$
\lim_{N\to\8}\E\left|\nn f(X_t^{1,N})-\nn f(X_t^1)\right|^2=0,\ \ t\in[0,T].
$$
Combining this with \eqref{ve1}, \eqref{ap1} and \eqref{bma}, we derive
$$
\lim_{N\to\8}D_\phi^I(P_t^Nf)(\mu)
=
D_\phi^I(P_tf)(\mu).
$$
Now assume further that {$\bf(H_4)$} holds. For any $\mu\in\scr P_q$, $\phi\in T_{\mu,q}$ and $f\in C_b^2(\R^d)$, from \eqref{ap02}, \eqref{ve1}, \eqref{ap2} and  \eqref{bma}, using Jensen's inequality, we obtain
\begin{align*}
\left|
D_\phi^I(P_t^Nf)(\mu)-D_\phi^I(P_tf)(\mu)
\right|
\leq&
\|\nn f\|_\8
\left(\E\left|V_t^{\phi,1,N}-V_t^{\phi,1}\right|^k\right)^{\ff1k}\\
&+
\|\nn^2 f\|_\8
\left(\E|V_t^{\phi,1}|^2
\cdot
\E|X_t^{1,N}-X_t^1|^2
\right)^{\ff12}\\
\leq&
c\epsilon(N)^{\ff\alpha k\left(\ff{q-k}{\rho+\alpha q}\wedge1\right)}
+
c\epsilon(N)^{\ff1k}.
\end{align*}
Since $\alpha\left(\ff{q-k}{\rho+\alpha q}\wedge1\right)\leq1$, the second term can be absorbed into the first one, then \eqref{bma2} follows. Therefore, we end the proof.
\end{proof}
\appendix
\section{Proof of Lemma \ref{poc} }\label{2.1}
\begin{enumerate}
\item [(1)] 
For any $t\in[0,T]$, $\boldsymbol{x}=(x^1,x^2,\cdots,x^N)
\in\R^{dN}$ for $x^i\in\R^d$, $1\leq i\leq N$ and $\mu^N_{\boldsymbol{x}}:=\ff 1 N\sum_{i=1}^N\delta_{x^i}$, denote
\begin{align*}
B^N_t(\boldsymbol{x})&:=\left(b_t(x^1,\mu^N_{\boldsymbol{x}}),b_t(x^2,\mu^N_{\boldsymbol{x}}),\cdots,b_t(x^N,\mu^N_{\boldsymbol{x}})\right),\\
\Sigma^N_t(\boldsymbol{x})&:=\operatorname{diag}\left(\si_t(x^1,\mu^N_{\boldsymbol{x}}),\si_t(x^2,\mu^N_{\boldsymbol{x}}),\cdots,\si_t(x^N,\mu^N_{\boldsymbol{x}})\right).
\end{align*}
Then under assumption {$\bf(H)$}, for any $N\geq1$, there exists a constant $C(N)>0$ such that for any $t\in[0,T]$, $\boldsymbol{x}=(x^1,x^2,\cdots,x^N)
,\boldsymbol{y}=(y^1,y^2,\cdots,y^N)
\in\R^{dN}$ for $x^i,y^i\in\R^d$, $1\leq i\leq N$, 	$$\sup_{t\in[0,T]}\left\{|B_t^N(\boldsymbol{0})|+\|\Sigma_t^N(\boldsymbol{0})\|\right\}<\8,$$ and
\begin{align*}
&2\big<\boldsymbol{x}-\boldsymbol{y},B^N_t(\boldsymbol{x})-B^N_t(\boldsymbol{y})\big\>^++\big\|\Sigma^N_t(\boldsymbol{x})-\Sigma^N_t(\boldsymbol{y})\big\|_{HS}^2\\&
\leq K\sum_{j=1}^N\Big(|x^j-y^j|^2+\W_k(\mu^N_{\boldsymbol{x}},\mu^N_{\boldsymbol{y}})^2\Big)\\&
\leq K\left(\|\boldsymbol{x}-\boldsymbol{y}\|_2^2+N^{1-\ff 2 k}\|\boldsymbol{x}-\boldsymbol{y}\|_k^2\right)
\leq C(N)\|\boldsymbol{x}-\boldsymbol{y}\|_{ 2}^2,
\end{align*}
where $\|\boldsymbol{x}-\boldsymbol{y}\|_p^p:=\sum_{j=1}^N|x^j-y^j|^{p}$, $p\geq1$, the second inequality follows from
\begin{align}\label{wk0}
\W_k(\mu^N_{\boldsymbol{x}},\mu^N_{\boldsymbol{y}})\leq \left(\ff 1 N\sum_{j=1}^N|x^j-y^j|^k\right)^{\ff 1 k}=N^{-\ff 1 k}\|\boldsymbol{x}-\boldsymbol{y}\|_k,
\end{align} and the final inequality is obtained from $$\|\boldsymbol{x}-\boldsymbol{y}\|_q\leq\|\boldsymbol{x}-\boldsymbol{y}\|_p\leq N^{ 1/p - 1/ q} \|\boldsymbol{x}-\boldsymbol{y}\|_q,\ \ 1\leq p\le q\le\8.$$ 
Noting that \eqref{E2} can be written as  a system in $(\R^d)^N$ with $\boldsymbol{X}_0=(X_0^1,X_0^2,\cdots,X_0^N)$:
\begin{align}
\d \boldsymbol{X}_t=B_t^N(\boldsymbol{X}_t)\d t+\Sigma_t^N(\boldsymbol{X}_t)\d \boldsymbol{W}_t,\ \ t\in[0,T],
\end{align}
where $\boldsymbol{X}=(X^1,X^2,\cdots,X^N)$ and $\boldsymbol{W}=(W^1,W^2,\cdots,W^N)$, \eqref{E2} is well-posed by the general SDE theory, see  \cite[Theorem 1.3.1]{RW24} for instance. 

Below, we prove the estimate \eqref{MM2}.  Using It\^o's formula and by {\bf (H)},  there exists a constant $c_1>0$ such that for any $1\le i\le N,\,t\in[0,T]$,\begin{align*}
\d |X_t^{i,N}|^2=&\Big\{2\<X_t^{i,N},b_t(X_t^{i,N},\hat\mu_t^N)\>+\|\si_t(X_t^{i,N},\hat\mu_t^N)\|_{HS}^2\Big\}\d t+2\<X_t^{i,N},\si_t(X_t^{i,N},\hat\mu_t^N)\d W_t^i\>\\
\le&c_1\left\{1+|X_t^{i,N}|^2+\left( \ff 1 N\sum_{j=1}^N|X_t^{j,N}|^k\right)^{\ff 2 k} \right\}\d t+2\<X_t^{i,N},\si_t(X_t^{i,N},\hat\mu_t^N)\d W_t^i\>.
\end{align*}
Then by  It\^o's formula, Jensen's and   Young's inequalities, for any $m\geq  k\geq 2$, it holds
\begin{align*}
\d |X_t^{i,N}|^m&\leq c_2(m)\left\{1+|X_t^{i,N}|^m+\left( \ff 1 N\sum_{j=1}^N|X_t^{j,N}|^k\right)^{\ff m k}\right\}\d t+\d N_t^i\\
&\leq c_2(m)\left\{1+|X_t^{i,N}|^m+\ff 1 N\sum_{j=1}^N|X_t^{j,N}|^m\right\}\d t+\d N_t^i,\ \ t\in[0,T], \,1\leq i\leq N
\end{align*}
for some constant $c_2(m)$, where $$\d \< N^i\>_t\leq c_2(m)|X_t^{i,N}|^{2m-2}\left(1+|X_t^{i,N}|^2+\Big(\ff 1 N\sum_{j=1}^N|X_t^{j,N}|^k\Big)^{\ff 2 k}\right)\d t.$$
Denote
$$G_t^i=\E\left[\sup_{s\in[0,t]}\left(1+|X_s^{i,N}|^m\right)\Big|\F_0\right],\ \ t\in[0,T],\,1\le i\le N.$$  
Invoking  the Burkholder--Davis--Gundy, the Jensen and  the Young inequalities, 
for any $1\leq i\leq N,\,t\in[0,T]$, we have
\begin{equation}\label{u1}
\begin{aligned}
G_t^i
\leq& \Big(1+|X_0^i|^{m}\Big)+c_3(m)\int_0^t\bigg\{G_s^i+\ff 1 N\sum_{j=1}^NG_s^j\bigg\}\,\d s\\&+\ff 1 2G_t^i+c_3(m)\int_0^t\E\bigg[|X_s^{i,N}|^{m-2}\bigg(\ff 1 N\sum_{j=1}^N|X_s^{j,N}|^k\bigg)^{\ff 2 k}\bigg|\F_0\bigg]\,\d s	\\
\leq& \Big(1+|X_0^i|^{m}\Big)+ c_4(m)\int_0^t \Big(G_s^i+\ff 1 N\sum_{j=1}^N G_s^j\Big)\,\d s+\ff 1 2G_t^i\end{aligned}
\end{equation}
for some constants $c_3(m),c_4(m)>0$.
Then it follows
\begin{align*}
\ff 1 N\sum_{j=1}^NG_t^j\leq&\ff 2 N\sum_{j=1}^N(1+|X_0^j|^m)+4c_4(m)\int_0^t\ff { 1}{N}\sum_{j=1}^NG_s^j\,\d s, \ \ t\in[0,T],
\end{align*}
which together with Gr\"onwall's inequality yields
\begin{align*}
\ff 1 N\sum_{j=1}^NG_t^j\leq 2\e^{4c_4(m)T}\left(1+\ff 1 N\sum_{j=1}^N|X_0^j|^m\right),\ \ t\in[0,T].
\end{align*}
Substituting this into \eqref{u1} and using Gr\"onwall's inequality again, we can find a constant $c_5(m)>0$ such that
\begin{align}\label{g}
G_t^i\leq c_5(m)\left(1+|X_0^i|^m+\ff 1 N\sum_{j=1}^N|X_0^j|^m\right),\ \ t\in[0,T],\,1\le i\le N,
\end{align} which implies \eqref{MM2}.
\item [(2)] 
Since $(X_t^i)_{1\leq i\leq N}$ are i.i.d., recalling that $\tt\mu_t^N:=\ff 1 N\sum_{j=1}^N\dd_{X_t^j}$, we have
\begin{align}\label{ap'1}
\lim_{N\to\8}\sup_{t\in[0,T]}\E\left[\W_k(\tt\mu_t^N,\mu_t)^k\right]=0.
\end{align}
Indeed,  \eqref{ap'1} follows from \eqref{MM1}, the law of large numbers for empirical measures on Polish spaces and \cite[Theorem 5.5]{CD}.

On the other hand, by {$\bf(H)$} and It\^o's formula, 
\begin{align*}
\d |X_t^{i,N}-X_t^i|^2=&\Big\{2\<X_t^{i,N}-X_t^i,b_t(X_t^{i,N},\hat\mu_t^N)-b_t(X_t^i,\mu_t)\>\\&+\|\si_t(X_t^{i,N},\hat\mu_t^N)-\si_t(X_t^i,\mu_t)\|_{HS}^2\Big\}\d t+\d M_t^i\\
\le&K\left(|X_t^{i,N}-X_t^i|^2+\W_k(\hat\mu_t^N,\mu_t)^2\right)\d t+\d M_t^i,\ \ 1\le i\le N,\,t\in[0,T],
\end{align*}
where $\d M_t^i:=2\<X_t^{i,N}-X_t^i,(\si_t(X_t^{i,N},\hat\mu_t^N)-\si_t(X_t^i,\mu_t))\d W_t^i\>$. Then using  It\^o's formula once more, and applying Young's inequality, there exists a constant $c_6(k)>0$ such that for any $t\in[0,T],\,1\le i\le N$,
\begin{align}\label{it1}
\d |X_t^{i,N}-X_t^i|^k\leq c_6(k)\left(|X_t^{i,N}-X_t^i|^k+\W_k(\hat\mu_t^N,\mu_t)^k\right)\d t+ \d \tt M_t^i,
\end{align}
where $$\d \<\tt M^i\>_t\leq c_6(k)|X_t^{i,N}-X_t^i|^{2k-2}\left(|X_t^{i,N}-X_t^i|^2+\W_k(\hat\mu_t^N,\mu_t)^2\right)\d t.$$

By the triangle inequality and \eqref{wk0}, for some constant $c_0(k)>0$,
\begin{equation}\label{wkt}
\begin{aligned}
\W_k(\hat\mu_t^N,\mu_t)^k
&\le c_0(k)\W_k(\hat\mu_t^N,\tt\mu_t^N)^k+c_0(k)\W_k(\tt\mu_t^N,\mu_t)^k\\
&\le c_0(k)\ff1N\sum_{j=1}^N|X_t^{j,N}-X_t^j|^k+c_0(k)\W_k(\tt\mu_t^N,\mu_t)^k.
\end{aligned}
\end{equation}
Hence, taking advantage of   the Burkholder--Davis--Gundy and the Young inequalities, and by the identical distribution of the coupled pairs $(X_t^{i,N},X_t^i)_{1\leq i\leq N}$, 
it follows for any $t\in[0,T]$ and some constants $c_7(k), c_8(k)>0$ that
\begin{equation}\label{it2}
\begin{aligned}
&\E\left[\sup_{s\in[0,t]}|X_s^{i,N}-X_s^i|^k\right]\leq  c_6(k)\int_0^t\left(\E|X_s^{i,N}-X_s^i|^k+\E\W_k(\hat\mu_s^N,\mu_s)^k\right)\,\d s\\&\quad\quad\quad+\E\left(c_6(k) \int_0^t|X_s^{i,N}-X_s^i|^{2k-2}\left(|X_s^{i,N}-X_s^i|^2+\W_k(\hat\mu_s^N,\mu_s)^2\right)\,\d s\right)^{\ff 1 2} \\
&\leq  c_7(k)\int_0^t\left(\E|X_s^{i,N}-X_s^i|^k+\E\W_k(\hat\mu_s^N,\mu_s)^k\right)\,\d s\\&\quad\quad\quad+\ff 1 2\E\left[\sup_{s\in[0,t]}|X_s^{i,N}-X_s^i|^k\right]+c_7(k)\int_0^t\E\left[|X_s^{i,N}-X_s^{i}|^{k-2}\W_k(\hat\mu_s^N,\mu_s)^2\right]\,\d s\\
&\leq c_8(k)\int_0^t\E|X_s^{i,N}-X_s^i|^k\,\d s+
\ff 1 2\E\left[\sup_{s\in[0,t]}|X_s^{i,N}-X_s^i|^k\right]\\&\quad\quad\quad+c_8(k)\int_0^t\E\W_k(\tt\mu_s^N,\mu_s)^k\,\d s,\ \ 1\leq i\leq N.	\end{aligned}
\end{equation}
Combining this with Gr\"onwall's inequality implies that
\begin{align}\label{ak1}
\E\left[\sup_{s\in[0,t]}|X_s^{i,N}-X_s^i|^k\right]\leq c_9(k)\sup_{s\in[0,t]}\E\W_k(\tt\mu_s^N,\mu_s)^k,\ \ 1\le i\le N,\,t\in[0,T]
\end{align}
for some constant $c_9(k)>0$, which tends to $0$ by \eqref{ap'1}. In addition, together  with \eqref{wkt}, we can also obtain that
\begin{align*}
\lim_{N\to\8}\sup_{t\in[0,T]}\E\left[\W_k(\hat\mu_t^N,\mu_t)^k\right]=0.
\end{align*} Thus, we derive the desired estimate \eqref{ap01}.
\item [(3)] 	If further assume that $\mu=\L_{X_0}\in\scr P_{q}$ for some $q>k$, according to  \cite[Theorem 1]{FG},
\begin{align}\label{wk2}
\sup_{t\in[0,T]}\E\W_k(\tt\mu_t^N,\mu_t)^k\leq c(q,k)\epsilon(N)
\end{align} for some constant $c(q,k)>0$. Then \eqref{ap02} follows from \eqref{wkt}, \eqref{ak1}, \eqref{wk2}. Therefore, we complete the proof.
\end{enumerate}
\section{Extensions of Lemmas \ref{gf1} and \ref{P3.1}}\label{df}
The conclusions of  Lemmas \ref{gf1} and \ref{P3.1} continue to hold when $\bf(H_1)$ is replaced by the following weaker condition.
\begin{enumerate}\item[$\bf (H_B)$] 
There exist constants $K>0$ and $q\geq k\geq2$ such that \eqref{SPPn} holds, and
\begin{enumerate}
\item [$(B_1)$]  For any $(t,x,\mu)\in [0,T]\times \R^d\times \scr P_q$,
\begin{align}\label{l1}
\|\nn b_t(x,\mu)\|\le K\left\{1+|x|^{\ff {q-k}k}+\|\mu\|_q^{\ff {q-k} k}\right\},
\end{align}
where $\|\mu\|_q^q:=\mu(|\cdot|^q)$.
\item [$(B_2)$] For any $t\in[0,T]$, $b_t,\si_t\in\D_q$ such that 
\begin{align}\label{l2}
\sup_{(t,x,\mu)\in[0,T]\times\R^d\times\scr P_q}\left\{\|D^Lb_t(x,\cdot)(\mu)\|_{L^{k^*}(\mu)}+\|D^L\si_t(x,\cdot)(\mu)\|_{L^{k^*}(\mu)}\right\}<\8,
\end{align}
and for any $(t,x,v,\mu)\in[0,T]\times \R^d\times\R^d\times\scr P_q$,
\begin{align*}
2\< v,\nn_v b_t(\cdot,\mu)(x)\>^++\|\nn_v\si_t(\cdot,\mu)(x)\|_{HS}^2\le K|v|^2.	\end{align*} 
\end{enumerate} 
\end{enumerate}
\begin{lem}\label{b}
Replace assumption 
$\bf(H_1)$ in Lemmas \ref{gf1} and \ref{P3.1} by 
$\bf(H_B)$, while keeping all the other assumptions unchanged. Then for any $X_0,\eta\in L^q(\Omega\to\R^d,\F_0,\P)$, the conclusions of Lemmas \ref{gf1} and \ref{P3.1} remain valid.
\end{lem}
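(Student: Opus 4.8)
The plan is to re-run the proofs of Lemmas \ref{gf1} and \ref{P3.1} (the latter relying on \cite{BRW20,W23b}) line by line, checking that each place where $\bf(H_1)$ was used through the \emph{boundedness} of $\nn b,\nn\si,D^Lb,D^L\si$ can be replaced by one of three ingredients. First, the one-sided bound in $(B_2)$, which upon integration along segments also yields the monotonicity $2\<x-y,b_t(x,\mu)-b_t(y,\mu)\>^+\le K|x-y|^2$ and $\|\si_t(x,\mu)-\si_t(y,\mu)\|_{HS}^2\le K|x-y|^2$ (in particular $\|\nn\si_t(x,\mu)\|\le\sqrt K$, so $\si$ has at most linear growth). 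Second, the uniform $L^{k^*}(\mu)$-bound on the intrinsic derivatives in $(B_2)$, used to absorb every mean-field term by H\"older's inequality, as in \eqref{psi2}. Third, the polynomial growth bound $(B_1)$ on $\nn b$ combined with the higher moment estimates \eqref{MM1}--\eqref{MM2}, which hold for every $m\ge1$ under the retained assumptions; the arithmetic that makes this close is that, by H\"older's inequality with conjugate exponents $\ff q{q-k}$ and $\ff qk$,
\[
\E\Big[\big(1+|X|^{\ff{q-k}k}+\|\mu\|_q^{\ff{q-k}k}\big)^{k}|v|^{k}\Big]\le C\big(1+\E|X|^{q}+\E|v|^{q}\big),
\]
so that all the resulting bounds involve only $L^q$-moments of the state and derivative processes, finite precisely because $X_0,\eta\in L^q$.

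For the directional flows I would first establish, uniformly in $\vv\in[0,1]$, $L^q$-bounds for $X_t^{\boldsymbol\eta,i,\vv}$, $X_t^{\boldsymbol\eta,i,N,\vv}$ (starting from $X_0^i+\vv\eta^i\in L^q$) and for the difference quotients $\vv^{-1}(X_t^{\boldsymbol\eta,i,\vv}-X_t^i)$, $\vv^{-1}(X_t^{\boldsymbol\eta,i,N,\vv}-X_t^{i,N})$; both follow from It\^o's formula and Gr\"onwall's lemma using only the monotonicity/Lipschitz consequences of $(B_2)$, exactly as in the proof of \eqref{MM1}--\eqref{MM2}. Next I would check that the linear SDEs \eqref{v1}, \eqref{v2} are well posed — their coefficients sitting in the right $L^p$-spaces because $X_t^i\in\bigcap_{m\ge1}L^m$, the drift controlled by the one-sided bound and the mean-field term by the $L^{k^*}$-bound — and that the difference quotients converge to $v_t^{\eta,i}$, $v_t^{\boldsymbol\eta,i,N}$ in $L^k(\Omega\to C([0,T];\R^d),\P)$; the only genuinely new point is the passage to the limit in the error term $\vv^{-1}\int_0^1[\nn_{\,\cdot\,}b_t(X_t+s(X_t^\vv-X_t),\mu_s)-\nn_{\,\cdot\,}b_t(X_t,\mu)]\,\d s$, handled by dominated convergence with domination supplied by the uniform $L^q$-bounds and the displayed H\"older estimate, and pointwise convergence by the continuity of the derivatives. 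The moment bounds \eqref{ve1}, \eqref{ve2} then follow from exactly the computation in the proof of Lemma \ref{gf1} from \eqref{v01} onward: the one-sided condition reproduces $\d|v_t^{i,N}|^m\le c_1(m)|v_t^{i,N}|^m\,\d t+c_1(m)\big(\ff1N\sum_j|v_t^{j,N}|^m\big)\,\d t+\d M_t^i$ with the mean-field term absorbed against $\|D^Lb_t(X_t^{i,N},\cdot)(\hat\mu_t^N)\|_{L^{k^*}(\hat\mu_t^N)}$, and the averaging-plus-Gr\"onwall argument runs unchanged.

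For the Malliavin flows (Lemma \ref{P3.1}), assumption $\bf(H_2)$ is kept, so $\si$ is distribution-free and, by the previous paragraph, globally Lipschitz; hence, using $\|a^{-1}\|_\infty<\infty$, the just-proved $L^q$-bounds on $v^{\eta,i},v^{\boldsymbol\eta,i,N}$, \eqref{MM1}, and once more H\"older's inequality in the split above, the processes $h^i,h^{i,N}$ of \eqref{LLP1} lie in $L^k(\Omega\to\R^d,\F_0,\P)$. With this in hand, the SDEs \eqref{w1}, \eqref{DR3}, the existence of $w^{h,i}=D_{\boldsymbol h}X^i_\cdot$, $w^{\boldsymbol h^N,i,N}=D_{\boldsymbol h^N}X^{i,N}_\cdot$ in $L^k(\Omega\to\C_T,\P)$, and the identities \eqref{LD}, \eqref{LD1} follow from the same linearity-and-uniqueness argument as in Lemma \ref{P3.1}, which never used more about $\nn b$ than $(B_2)$ now provides (the estimate \eqref{we3} extends the same way). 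I expect the main obstacle to be the uniform-in-$\vv$ passage to the limit in the difference quotients: one must ensure the polynomial-growth error terms, of the form $\big(1+|X_t^{\boldsymbol\eta,i,\vv}|^{(q-k)/k}+\cdots\big)|X_t^{\boldsymbol\eta,i,\vv}-X_t^i|$, are uniformly integrable over $\vv\in[0,1]$, which is exactly where the strict gap $q>k$ and the hypothesis $X_0,\eta\in L^q$ rather than merely $L^k$ are genuinely needed; every other step is a line-by-line re-run of arguments already in the paper.
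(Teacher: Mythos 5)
Your proposal is correct and takes essentially the same route as the paper's own proof: the paper likewise trades boundedness for the a.s.\ $L^{k^*}(\hat\mu_t^N)$-bound on the Lions derivatives (its estimate \eqref{d1}), establishes uniform-in-$\varepsilon$ $L^q$-bounds for the difference quotients, and closes the limit of the remainder terms by the same $\frac{q}{q-k}$--$\frac{q}{k}$ H\"older split together with a convergence argument relying on $X_0,\eta\in L^q$, writing out only the particle-system directional flow and declaring the remaining verifications identical. The only cosmetic difference is that the paper imports the pointwise vanishing of the remainder terms from \cite{W23b} rather than appealing directly to continuity of the derivatives as you do.
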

{\begin{rem}
Assumption ${\bf(H_B)}$ does not require $b$ to be globally Lipschitz in $x$. When $q>k$, it allows dissipative drifts with unbounded spatial derivatives, for example, for any $\beta\le {\frac{q-k}{2k}}$,
$$
b_t(x,\mu)=-(1+|x|^2)^\beta x,\ \ (t,x,\mu)\in [0,T]\times\R^d\times\scr P_q.
$$
Such drifts satisfy \eqref{l1} and the one-sided condition in $(B_2)$, but
are not globally Lipschitz in $x$. Hence ${\bf(H_B)}$ covers a class of dissipative
drifts with polynomially growing derivatives.
\end{rem}}
\begin{proof}[Proof of Lemma \ref{b}] Since the remaining arguments are similar, we only provide the proof of \eqref{vi2} to avoid repetition.
Note that \eqref{MM2} yields that $\hat\mu_t^N\in\scr P_q$,  almost surely, then it follows from \eqref{l2}  that 
\begin{equation}\label{d1}
\begin{aligned}
&\ff 1 N\sum_{j=1}^N\Big|D^Lb_t(X_t^{i,N},\cdot)(\hat\mu_t^N)(X_t^{j,N})\Big|^{k^*}
=\|D^Lb_t(X_t^{i,N},\cdot)(\hat\mu_t^N)\|_{L^{k^*}(\hat\mu_t^N)}^{k^*}\le M,\ \ \text{a.s.}
\end{aligned}
\end{equation} for some constant $M>0$. Thus H\"older's inequality implies that
\begin{equation}\label{d2}
\begin{aligned}
\left|\ff 1 N\sum_{j=1}^N\Big\<D^Lb_t(X_t^{i,N},\cdot)(\hat\mu_t^N)(X_t^{j,N}),V_t^{j,N}\Big\>\right|\le M^{\ff 1 {k^*}}\left(\ff 1 N\sum_{j=1}^N|V_t^{j,N}|^k\right)^{\ff 1 k}.
\end{aligned}
\end{equation}
Similarly, \eqref{d1} and \eqref{d2}  hold for $\si$ replacing $b$.
Along with \eqref{d1}, \eqref{d2} and the arguments used in the original proof of \eqref{ve2}, we obtain that for any $m\geq k$, there exists a constant $c_1>0$ such that
\begin{align}\label{ve2'}
\E\left[\sup_{t\in[0,T]}|V_t^{i,N}|^m\bigg|\F_0\right]\leq c_1\left( |\eta^i|^m+\ff 1 N\sum_{j=1}^N|\eta^j|^m\right),\ \ 1\le i\le N.
\end{align} 
It remains to prove that
\begin{align*}
\lim_{\varepsilon\downarrow0}
\max_{1\leq i\leq N}
\E\left[
\sup_{t\in[0,T]}
\left|
\ff{X_t^{i,N,\varepsilon}-X_t^{i,N}}{\varepsilon}
-
V_t^{i,N}
\right|^k
\right]=0.
\end{align*}
Set
$$
\widetilde V_t^{i,N,\varepsilon}
:=
\ff{X_t^{i,N,\varepsilon}-X_t^{i,N}}{\varepsilon},
\quad 1\leq i\leq N,\ \varepsilon\in(0,1].
$$
Using \eqref{wk0}, assumption {$(B_2)$}, It\^o's formula, Jensen's and Young's inequalities, we find a constant $c_2>0$ such that
\begin{align*}
\d |\widetilde V_t^{i,N,\varepsilon}|^q
&\leq
c_2\left\{
|\widetilde V_t^{i,N,\varepsilon}|^q
+\varepsilon^{-q}\W_k(\hat\mu_t^{N,\varepsilon},\hat\mu_t^N)^q
\right\}\d t+\d M_t^i\\
&\leq
c_2\left\{
|\widetilde V_t^{i,N,\varepsilon}|^q
+\ff1N\sum_{j=1}^N|\widetilde V_t^{j,N,\varepsilon}|^q
\right\}\d t+\d M_t^i, \ \ 1\le i\le N,\,t\in[0,T],
\end{align*}
where $\widetilde V_0^{i,N,\varepsilon}=\eta^i$ and
\begin{align*}
\d\<M^i\>_t
\leq c_2|\widetilde V_t^{i,N,\varepsilon}|^{2q-2}
\left\{
|\widetilde V_t^{i,N,\varepsilon}|^2
+
\left(\ff1N\sum_{j=1}^N|\widetilde V_t^{j,N,\varepsilon}|^k\right)^{\ff2k}
\right\}\d t.
\end{align*}
By the Burkholder--Davis--Gundy, Jensen's and Young's inequalities, there exists a constant $c_3>0$ such that
\begin{align*}
\E\left[\sup_{s\in[0,t]}|\widetilde  V_s^{i,N,\vv}|^q\right]\le& \|\eta\|_q^q+c_3\int_0^t\E|\widetilde  V_s^{i,N,\vv}|^q\,\d s+\ff 1 2	\E\left[\sup_{s\in[0,t]}|\widetilde  V_s^{i,N,\vv}|^q\right]\\+&\ff 1 N\sum_{j=1}^N
\int_0^t\E|\widetilde  V^{j,N,\vv}_s|^q\,\d s,\ \ 1\le i\le N,\,t\in[0,T],
\end{align*}where $\|\eta\|_q^q:=\E|\eta|^q$. Repeating the argument leading to \eqref{ak1}, 
\begin{align}\label{tv1}
\max_{1\leq i\leq N}
\E\left[\sup_{t\in[0,T]}|\widetilde V_t^{i,N,\varepsilon}|^q\right]
\leq c_4\|\eta\|_q^q<\infty
\end{align}
holds for some constant $c_4>0$.

Subsequently, for any $t\in[0,T],\,\t\in[0,1]$, denote $$\gamma_t^{i,N,\t}=X_t^{i,N}+\t(X_t^{i,N,\vv}-X_t^{i,N}), \ 1\le i\le N, \ \ \hat\mu_t^{N,\t,\gamma}=\ff 1 N\sum_{j=1}^N\delta_{\gamma_t^{j,N,\t}},$$ and for any $t\in[0,T],\,1\le i\le N$, we write
\begin{equation}\label{tv2}
\begin{aligned}
\d \widetilde V_t^{i,N,\varepsilon}
=&
\nn_{\widetilde V_t^{i,N,\varepsilon}}b_t(X_t^{i,N},\hat\mu_t^N)\d t
+\ff1N\sum_{j=1}^N
\Big\<D^Lb_t(X_t^{i,N},\cdot)(\hat\mu_t^N)(X_t^{j,N}),
\widetilde V_t^{j,N,\varepsilon}\Big\>\d t\\
&+\alpha_t^{i,\varepsilon}\d t+\beta_t^{i,\varepsilon}\d W_t^i+
\bigg\{
\nn_{\widetilde V_t^{i,N,\varepsilon}}\si_t(X_t^{i,N},\hat\mu_t^N)\\
&
+\ff1N\sum_{j=1}^N
\Big\<D^L\si_t(X_t^{i,N},\cdot)(\hat\mu_t^N)(X_t^{j,N}),
\widetilde V_t^{j,N,\varepsilon}\Big\>
\bigg\}\d W_t^i,
\end{aligned}
\end{equation}
where through Lemma \ref{de}, \begin{align*}
&\alpha^{i,\vv}_t:=\ff {b_t(X_t^{i,N,\vv},\hat\mu_t^{N})-b_t(X_t^{i,N},\hat\mu_t^N)}{\vv}-\nn_{\widetilde  V_t^{i,N,\vv}}b_t(X_t^{i,N},\hat\mu_t^N)\\
&\quad+\ff {b_t(X_t^{i,N,\vv},\hat\mu_t^{N,\vv})-b_t(X_t^{i,N,\vv},\hat\mu_t^N)}{\vv}-\ff 1 N\sum_{j=1}^N\Big\<D^Lb_t(X_t^{i,N},\cdot)(\hat\mu_t^N)(X_t^{j,N}),\widetilde  V_t^{j,N,\vv}\Big\>\\
&=\int_0^1 \left\{\nn_{\widetilde  V_t^{i,N,\vv}}b_t(\gamma_t^{i,N,\t},\hat\mu_t^N)-\nn_{\widetilde  V_t^{i,N,\vv}}b_t(X_t^{i,N},\hat\mu_t^N)\right\}\,\d\t\\
&\quad+\ff 1 N\sum_{j=1}^N\int_0^1\Big\<\widetilde  V_t^{j,N,\vv},D^Lb_t(X_t^{i,N,\vv},\cdot)(\hat\mu_t^{N,\t,\gamma})(\gamma_t^{j,N,\t})-D^Lb_t(X_t^{i,N},\cdot)(\hat\mu_t^N)(X_t^{j,N})\Big\>\d \t,	\end{align*}and
\begin{align*}
&\beta^{i,\vv}_t:=\ff {\si_t(X_t^{i,N,\vv},\hat\mu_t^{N})-\si_t(X_t^{i,N},\hat\mu_t^N)}{\vv}-\nn_{\widetilde  V_t^{i,N,\vv}}\si_t(X_t^{i,N},\hat\mu_t^N)\\
&\quad+\ff {\si_t(X_t^{i,N,\vv},\hat\mu_t^{N,\vv})-\si_t(X_t^{i,N,\vv},\hat\mu_t^N)}{\vv}-\ff 1 N\sum_{j=1}^N\Big\<D^L\si_t(X_t^{i,N},\cdot)(\hat\mu_t^N)(X_t^{j,N}),\widetilde  V_t^{j,N,\vv}\Big\>\\
&=\int_0^1 \left\{\nn_{\widetilde  V_t^{i,N,\vv}}\si_t(\gamma_t^{i,N,\t},\hat\mu_t^N)-\nn_{\widetilde  V_t^{i,N,\vv}}\si_t(X_t^{i,N},\hat\mu_t^N)\right\}\,\d\t\\
&\quad+\ff 1 N\sum_{j=1}^N\int_0^1\Big\<\widetilde  V_t^{j,N,\vv},D^L\si_t(X_t^{i,N,\vv},\cdot)(\hat\mu_t^{N,\t,\gamma})(\gamma_t^{j,N,\t})-D^L\si_t(X_t^{i,N},\cdot)(\hat\mu_t^N)(X_t^{j,N})\Big\>\d \t.
\end{align*} By the continuity contained in the definition  of $\D_q$, we can repeat   the proof of \cite[(2.18), (4.9)]{W23b}  to derive that for every fixed $t\in(0,T]$,
\begin{align}\label{ab0}
\lim_{\varepsilon\downarrow0}
\max_{1\leq i\leq N}
\left\{
|\alpha_t^{i,\varepsilon}|+\|\beta_t^{i,\varepsilon}\|
\right\}
=0,\quad \P\text{-a.s.}
\end{align} 

Additionally,  {$(B_1)$}, H\"older's inequality, \eqref{wk0} and \eqref{d1} imply that 
\begin{align*}
|\alpha_t^{i,\varepsilon}|^k+\|\beta_t^{i,\varepsilon}\|^k
\leq&
\ff{c_5}{N}\sum_{j=1}^N
\left(|\widetilde V_t^{j,N,\varepsilon}|^k
+|\widetilde V_t^{i,N,\varepsilon}|^k\right)\times
\bigg\{
1+|X_t^{i,N}|^{q-k}+|X_t^{i,N,\varepsilon}|^{q-k}
\\
&+|X_t^{j,N}|^{q-k}+|X_t^{j,N,\varepsilon}|^{q-k}
+\bigg(\ff1N\sum_{j=1}^N\left(|X_t^{j,N}|+|X_t^{j,N,\vv}-X_t^{j,N}|\right)^q\bigg)^{\ff{q-k}{q}}
\bigg\}
\end{align*}
for some constant $c_5>0$. Since $X_0,\eta\in L^q(\Omega\to\R^d,\F_0,\P)$, then by \eqref{MM2}, \eqref{tv1} and H\"older's inequality, we can obtain
\begin{align*}
\sup_{\varepsilon\in(0,1]}
\max_{1\leq i\leq N}
\E\left[\sup_{t\in[0,T]}
\left\{
|\alpha_t^{i,\varepsilon}|^k+\|\beta_t^{i,\varepsilon}\|^k
\right\}\right]
<\infty.
\end{align*}
Combining \eqref{ab0} with the above uniform integrability and Vitali's convergence theorem, it follows
\begin{align}\label{ab1}
\lim_{\vv\downarrow0}\max_{1\le i\le N}\E\left[\int_0^T\left\{|\alpha^{i,\vv}_t|^k+\|\beta_t^{i,\vv}\|^k\right\}\,\d t\right]=0.
\end{align}
Now for any $t\in[0,T]$, $1\le i\le N$ and $\vv>0$, set
$$
\Lambda_t^{i,N,\varepsilon}
=
\widetilde V_t^{i,N,\varepsilon}-V_t^{i,N},
$$
then subtracting the equation for $V_t^{i,N}$ from \eqref{tv2},  we get
\begin{equation*}
\begin{aligned}
\d \Lambda_t^{i,N,\varepsilon}
=&
\left\{
\nn_{\Lambda_t^{i,N,\varepsilon}}b_t(X_t^{i,N},\hat\mu_t^N)
+\ff1N\sum_{j=1}^N
\Big\<D^Lb_t(X_t^{i,N},\cdot)(\hat\mu_t^N)(X_t^{j,N}),
\Lambda_t^{j,N,\varepsilon}\Big\>
\right\}\d t\\
&+\alpha_t^{i,\varepsilon}\d t+\beta_t^{i,\varepsilon}\d W_t^i+
\bigg\{
\nn_{\Lambda_t^{i,N,\varepsilon}}\si_t(X_t^{i,N},\hat\mu_t^N)\\
&
+\ff1N\sum_{j=1}^N
\Big\<D^L\si_t(X_t^{i,N},\cdot)(\hat\mu_t^N)(X_t^{j,N}),
\Lambda_t^{j,N,\varepsilon}\Big\>
\bigg\}\d W_t^i
\end{aligned}
\end{equation*}with $\Lambda_0^{i,N,\varepsilon}=0$.
By {$(B_2)$},  It\^o's formula, \eqref{d2}, H\"older's and Young's inequalities, there exists a constant $c_6>0$ such that
\begin{align*}
\d|\Lambda_t^{i,N,\varepsilon}|^k
\leq&
c_6\left\{
|\Lambda_t^{i,N,\varepsilon}|^k
+\ff1N\sum_{j=1}^N|\Lambda_t^{j,N,\varepsilon}|^k
+|\alpha_t^{i,\varepsilon}|^k+\|\beta_t^{i,\varepsilon}\|^k
\right\}\d t+\d M_t^i,
\end{align*}
where
\begin{align*}
\d\<M^i\>_t
\leq&
c_6|\Lambda_t^{i,N,\varepsilon}|^{2k-2}
\bigg(
|\Lambda_t^{i,N,\varepsilon}|^2
+\|\beta_t^{i,\varepsilon}\|^2\\
&\quad+
\bigg\|
\ff1N\sum_{j=1}^N
\Big\<D^L\si_t(X_t^{i,N},\cdot)(\hat\mu_t^N)(X_t^{j,N}),
\Lambda_t^{j,N,\varepsilon}\Big\>
\bigg\|^2
\bigg)\d t.
\end{align*}
Due to \eqref{d2} for $\si$ replacing $b$, the H\"older and the Burkholder--Davis--Gundy inequalities, we obtain
\begin{equation*}
\begin{aligned}\label{lm}
\E\left[\sup_{s\in[0,t]}|\Lambda^{i,N,\vv}_s|^k\right]\leq& c_7\int_0^t\E\left\{|\Lambda^{i,N,\vv}_s|^k+\ff 1 N\sum_{j=1}^N|\Lambda^{j,N,\vv}_s|^k\right\}\,\d s\\
&+\ff 1 2	\E\left[\sup_{s\in[0,t]}|\Lambda^{i,N,\vv}_s|^k\right]+c_7\int_0^t\E\left\{|\alpha^{i,\vv}_s|^k+\|\beta_s^{i,\vv}\|^k\right\}\,\d s
\end{aligned}
\end{equation*}
holds  for some $c_7>0$ and for any $t\in[0,T]$, $1\le i\le N$. 
Then repeating the argument leading to \eqref{g} once more, it follows for some constant $c_8>0$,
\begin{align*}
\max_{1\le i\le N}\E\left[\sup_{t\in[0,T]}|\Lambda^{i,N,\vv}_t|^k\right]\leq c_8\max_{1\le i\le N}\E\left[\int_0^T\left\{|\alpha^{i,\vv}_t|^k+\|\beta_t^{i,\vv}\|^k\right\}\,\d t\right], 
\end{align*}
which tends to $0$ as $\vv\downarrow0$ by \eqref{ab1}.  This completes the proof.
\end{proof}
\section*{Acknowledgments}
The author would like to thank Professor Feng-Yu Wang for useful conversations and corrections.

\end{document}